\documentclass[12pt]{amsart}
\usepackage{amsmath,amsthm,amsfonts,amssymb,latexsym,enumerate}
\usepackage[pagebackref]{hyperref}

\headheight=7pt
\textheight=574pt
\textwidth=432pt
\topmargin=14pt
\oddsidemargin=18pt
\evensidemargin=18pt


\newcommand{\bE}{{\mathbf E}}
\newcommand{\bC}{{\mathbf C}}

\newcommand{\bF}{{\mathbf F}}
\newcommand{\bN}{{\mathbf N}}

\newcommand{\SSS}{\mathsf{S}}


\newcommand{\Aut}{{{\operatorname{Aut}}}}

\newcommand{\Irr}{{{\operatorname{Irr}}}}
\newcommand{\Out}{{{\operatorname{Out}}}}
\newcommand{\GL}{\operatorname{GL}}

\newcommand{\SL}{\operatorname{SL}}

\newcommand{\PSL}{\operatorname{PSL}}
\newcommand{\GF}{\operatorname{GF}}

\newcommand{\Ker}{\operatorname{Ker}}

\newcommand{\CCC}{\mathsf{C}}

\newtheorem{thm}{Theorem}[section]
\newtheorem{lem}[thm]{Lemma}

\newtheorem{que}[thm]{Question}

\newtheorem*{thmA}{Theorem A}
\newtheorem*{conA'}{Conjecture A'}
\newtheorem*{thmB}{Theorem B}
\newtheorem*{conC}{Conjecture C}
\newtheorem*{conD}{Conjecture D}

\theoremstyle{definition}

\numberwithin{equation}{section}




\begin{document}

\title[Number of conjugacy classes]{Prime divisors and the number of conjugacy classes of finite groups}

\author{Thomas Michael Keller}
\address{Department of Mathematics, Texas State University, 601 University Drive, San Marcos, TX 78666, USA}
\email{keller@txstate.edu}
\author{Alexander Moret\'o}
\address{Departament de Matem\`atiques, Universitat de Val\`encia, 46100
  Burjassot, Val\`encia, Spain}
\email{alexander.moreto@uv.es}

\thanks{We thank R. Guralnick for helping us understand the results in [5]. We also thank the referee for his/her very careful reading of the paper. Parts of this work were written while both authors were attending the conference ``Counting conjectures and beyond" at the Isaac Newton Institute for Mathematical Sciences in Cambridge. We thank the institute for their financial support via the EPSRC grant no EP/R014604/1.
The research of the second author is supported by  Ministerio de Ciencia e Innovaci\'on (Grant PID2019-103854GB-I00 funded by MCIN/AEI/ 10.13039/501100011033) and Generalitat Valenciana CIAICO/2021/163.}

\keywords{Number of conjugacy classes, finite group, irreducible module}

\subjclass[2010]{Primary 20E45, Secondary 20C15, 20C20, 20D25}

\date{\today}

\begin{abstract}
We prove that there exists a universal constant $D$ such that  if $p$ is a prime divisor of the index of the Fitting subgroup of a finite group $G$, then the number of conjugacy classes of $G$ is at least $Dp/\log_2p$.  We conjecture that we can take $D=1$ and prove that for solvable groups, we can take $D=1/3$.
\end{abstract}

\maketitle



  \section{Introduction}

The study of the number of conjugacy classes $k(G)$ of a finite group $G$ has been a central theme in group theory since the work of Burnside, Frobenius and Landau around 1900. Since then, much effort has been made on the determination of finite groups with a small number of conjugacy classes (see \cite{vl1} and the references there). In a different direction, R. Brauer \cite{bra} gave the first explicit lower bound for  $k(G)$ in terms of $|G|$ and, in one of the problems in his celebrated collection of open problems, asked for substantially better bounds. This was essentially solved by L. Pyber in 1992 \cite{pyb}, but there have been a number of improvements on Pyber's bound since then (see \cite{baumeister, keller} and the references there). 

 A variation of Brauer's problem was considered by  L. H\'ethelyi and B. K\"ulshammer \cite{hk1} in 2000.  They proved that if $G$ is a finite solvable group and $p$ divides $|G|$, then  $k(G)\geq2\sqrt{p-1}$ (and this bound is best possible). This has originated a lot of, still ongoing, research. They conjectured that this inequality should hold for an arbitrary group  $G$ and even that $k(B)\geq2\sqrt{p-1}$ when $B$ is a Brauer $p$-block of $G$  of positive defect. The H\'ethelyi-K\"ulshammer bound was finally extended to arbitrary finite groups by A. Mar\'oti in \cite{mar}. Later, Mar\'oti and G. Malle proved that even the number of irreducible characters of degree not divisible by $p$ of a finite group $G$ is $|\Irr_{p'}(G)|\geq2\sqrt{p-1}$. (See \cite{hmm} for a recent strengthening of this result involving fields of values.)
  The block-version still remains open, but the principal block case has been very recently solved by N. N. Hung and A. A. Schaeffer Fry. In \cite{hs}, they prove that if $B_0$ is the principal $p$-block of a finite group $G$, then the number of irreducible characters in $B_0$ is $k(B_0)\geq2\sqrt{p-1}$. Even more, Hung, Schaeffer Fry and C. Vallejo have proven in \cite{hsv} that the number of height zero irreducible characters in the principal $p$-block is $k_0(B_0)\geq2\sqrt{p-1}$.

  On the other hand, R. Guralnick and G. Robinson \cite{gr} have proved that if $\bF(G)$ is the Fitting subgroup of a group $G$ of even order, $p$ divides $|G:\bF(G)|$ and $t\in G$ is an involution, then $p\leq|\bC_G(t)|+1$. This bound is best possible, as shown by the groups $\SL_2(p-1)$ for $p\geq5$ a Fermat prime.  We have observed that for these groups $k(\SL_2(p-1))=|\bC_G(t)|+1$, and this leads us to the question of whether $k(G)\geq p$ when $p$ is a prime that divides $|G:\bF(G)|$. This is false, as $k(\PSL_2(p))=(p+5)/2$ for any prime $p>3$. Even more, if we assume that there are infinitely many Mersenne primes (as is expected), then the best bound that we can hope for is of the order of $p/\log p$, where $\log$ (here and thoroughout the paper) means $\log_2$. In order to see this, 
  let $q$ be a prime such that $p=2^q-1$ is a Mersenne prime and consider the affine semi-linear group $G=A\Gamma(2^q)$. Then it is easy to see that $p$ divides $|G:\bF(G)|$ and
$$
k(G)=\frac{p-1}{q}+2q=\frac{p-1}{\log(p+1)}+2\log(p+1)\geq\frac{p}{\log p}.
$$
Our main result shows that indeed there exists a bound of this order of magnitude.

  \begin{thmA}
There exists a constant $D$ such that 
if $G$ is a finite  group and $p$ divides $|G:\bF(G)|$, then $$k(G)\geq D(p/\log p).$$ 
Furthermore, if $G$ is solvable, we can take  $D=1/3$.
\end{thmA}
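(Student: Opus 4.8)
The plan is to reduce to primitive linear group actions and then count conjugacy classes via the Clifford-theoretic formula $k(V\rtimes H)=\sum_{[\lambda]}k(I_H(\lambda))$, where the sum runs over the $H$-orbits on the dual module $\widehat V$ and $I_H(\lambda)$ denotes the inertia subgroup of $\lambda$. Since $k(G)\ge k(G/N)$ for every $N\trianglelefteq G$, I am free to pass to quotients throughout. In the solvable case, after replacing $G$ by $G/\Phi(G)$, Gasch\"utz's theorem gives $G=\bF(G)\rtimes H$ with $H\cong G/\bF(G)$ acting faithfully on $\bF(G)$, a direct product of irreducible modules $V_1,\dots,V_m$; since $p$ divides $|H|$ and $H$ embeds into $\prod_i H/\bC_H(V_i)$, the prime $p$ divides $|H/\bC_H(V_i)|$ for some $i$, and factoring out $\prod_{j\ne i}V_j$ together with $\bC_H(V_i)$ produces a quotient of the form $V\rtimes\overline{H}$ with $V=V_i$ an irreducible faithful $\overline{H}$-module over $\mathbb{F}_\ell$ and $p\mid|\overline{H}|$. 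Dealing with tensor and imprimitivity decompositions by induction on $\dim V$, it then suffices to prove: \emph{if the solvable group $H$ acts faithfully, irreducibly and primitively on $V=\mathbb{F}_\ell^n$ and $p\mid|H|$, then $k(V\rtimes H)\ge p/(3\log p)$.}

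For this core statement I would use the two crude consequences of the formula above: $k(V\rtimes H)\ge k(H)$ (the term $\lambda=0$) and $k(V\rtimes H)\ge(|V|-1)/|H|$ (counting nonzero orbits). A nontrivial element of order $p$ in $H$ acts on $V$ with commutator subgroup of order at least $\ell^{d}\ge p+1$, where $d$ is the multiplicative order of $\ell$ modulo $p$ and is bounded only logarithmically in $p$ — this is the source of the $\log p$. Now I would invoke the Suprunenko--Huppert description of primitive solvable linear groups — $\bF(H)$ a central product of a cyclic group and symplectic-type groups, $V$ a matching tensor product, $H/\bF(H)$ inside a product of symplectic groups over the constituent fields — and argue by cases: if $|H|$ is small relative to $|V|$, the orbit bound $(|V|-1)/|H|$ already exceeds $p$; if $H$ is large (a big one-dimensional semilinear constituent $\Gamma\mathrm{L}_1(\ell^{n})$ or a full symplectic normalizer), then $k(H)$, read off from its Singer- or Frobenius-like structure, is itself of order at least $p/\log p$; and in the intermediate range one applies the H\'ethelyi--K\"ulshammer bound $k(X)\ge 2\sqrt{p-1}$ \cite{hk1} to those inertia subgroups $I_H(\lambda)$ whose order remains divisible by $p$, uses that the other nonzero orbits have length divisible by $p$, and checks that the bookkeeping yields the constant $1/3$.

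For arbitrary $G$ one has, in addition, components. After passing to quotients one is reduced to the situation where one of the following holds: (a) $p$ divides $|S|$ for a composition factor $S$ of $\bE(G)$; (b) $p$ divides $|G:\bF^{*}(G)|$ and the image of $G$ in $\Aut(\bF^{*}(G))$ permutes at least $p$ isomorphic components; (c) as in (b), but $p$ divides $|\Out(S)|$ for a component $S$ with $p\nmid|S|$; or (d) $p$ divides the ``outer'' part of $\Aut(\bF(G))$, which is again the solvable situation. In case (a) the classification of finite simple groups gives $k(S)\ge c\,p/\log p$: for $S$ alternating the partition function is vastly larger; for $S$ sporadic it is a finite check; for $S$ of Lie type over $\mathbb{F}_q$, $p$ divides a single cyclotomic factor $\Phi_e(q)$ of $|S|$, so $p\le (q+1)^{\varphi(e)}$ with $\varphi(e)$ bounded in terms of the rank, while the number of semisimple classes of $S$ is already of order $q^{\mathrm{rank}}$. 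In case (b), $\bF^{*}(G)$ has a section $S\wr C_p$ and $k(S\wr C_p)\ge k(S)^{p}/p$ is enormous. In case (c), a field automorphism of order $p$ forces $|S|$ to be exponential in $p$, so even an estimate of the shape $k(G)\ge k(S)/p$ (in the spirit of the Guralnick--Robinson bounds, cf.\ \cite{gr}) beats $p/\log p$. Case (d) is the solvable analysis. Taking $D$ to be the least of the constants produced completes the proof.

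The hard part will be the solvable core in the intermediate range: one has to produce the bound $p/(3\log p)$ \emph{uniformly} across all primitive solvable linear groups — the semilinear groups $\Gamma\mathrm{L}_1(\ell^{n})$, the extraspecial (symplectic-type) normalizers, and their tensor combinations — exactly where neither $k(H)$ nor the crude orbit count $(|V|-1)/|H|$ alone is large enough, so that one must weigh the contributions $k(I_H(\lambda))$ against the orbit lengths. In the general case the corresponding delicate point is organizing the reductions so that the hypothesis $p\mid|G:\bF(G)|$ survives into exactly one of the forms (a)--(d); once it does, each case is governed by a known structural input.
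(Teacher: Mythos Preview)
Your outline contains the right large-scale reductions (passing to $G/\Phi(G)$, applying Gasch\"utz, projecting onto a single irreducible summand $V$, and your case split (a)--(d) for the generalized Fitting subgroup in the non-solvable situation is essentially the paper's Lemma~\ref{unique} and Theorem~\ref{psol}). But two steps that you treat as routine are in fact the heart of the argument and do not work as you describe.

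First, the claimed reduction ``dealing with tensor and imprimitivity decompositions by induction on $\dim V$'' is not available: if $V$ is an imprimitive $H$-module, there is no quotient of $V\rtimes H$ of the form $V'\rtimes H'$ with $V'$ a strictly smaller faithful irreducible $H'$-module and $p\mid|H'|$; the imprimitivity blocks are not $H$-invariant, so nothing of the sort is a normal subgroup. The paper does \emph{not} reduce to the primitive case---it devotes all of Section~5 to a direct orbit-counting argument in the imprimitive situation (showing $P\le N$ for the block kernel $N$, then proving the delicate combinatorial statement that $N$ has at least $p^{n-1}/(p-1)$ orbits on $V$, and combining this with Dixon's bound $|H/N|\le 24^{(n-1)/3}$). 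Second, you never invoke the reduction to $|G|_p=p$: the paper uses Theorem~\ref{p2} (H\'ethelyi--K\"ulshammer~II plus Mar\'oti--Simion: $p^2\mid|G|$ forces $k(G)\ge cp$) at the very start, and this is what allows one to assume that the Sylow $p$-subgroup of $H$ is cyclic of order $p$ and \emph{normal} in $H$ (by passing to $\bF_2(G)/\bF(G)$). Without that normality, neither the imprimitive argument above nor the primitive argument goes through; in particular your ``intermediate range'' analysis, which you already flag as the hard part, is handled in the paper via Seager's theorem (Theorem~\ref{sea}) on orbit counts of primitive solvable linear groups, a tool your Suprunenko--Huppert approach would have to replace by an equally sharp estimate you do not supply. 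For the mere existence of $D$ in the general case, the paper takes yet another route you do not mention (Keller's $k(SV)\ge|SV|^{\beta}$ for solvable $S$ with trivial Frattini, together with the Baumeister--Mar\'oti--Tong-Viet inequality $\log|G/S|\le\delta_0(\log k(G/S))^4$ for $G/S$ with trivial solvable radical), which sidesteps the primitive/imprimitive split entirely.
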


 Note that the hypothesis $p$ divides $|G:\bF(G)|$ is redundant for non-$p$-solvable groups. 
 In this case,  we show that there are even better bounds.
 
\begin{thmB}
Let $p$ be a prime. Then
there exists $C>0$ such that if  $G$ is a non-$p$-solvable group, then $k(G)\geq Cp$.
\end{thmB}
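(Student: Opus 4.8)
The plan is to prove the statement by induction on $|G|$, reducing it to a fact about finite simple groups. First I would fix the constant: for the given prime $p$ and a finite nonabelian simple group $T$, let $k^{*}(T)$ be the number of orbits of $\Aut(T)$ on the set of conjugacy classes of $T$ — equivalently the number of $\Out(T)$-orbits, since inner automorphisms fix every class — and set
\[
c_{p}=\min\{\, k^{*}(T) : T\ \text{finite nonabelian simple},\ p\mid|T| \,\}.
\]
Granting for the moment that this minimum exists and is positive, I claim $C:=c_{p}/p$ works. In the inductive step let $G$ be non-$p$-solvable with $k(H)\ge Cp$ for every non-$p$-solvable $H$ of smaller order. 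If $G$ has a nontrivial $p$-solvable normal subgroup $N$, then $G/N$ is still non-$p$-solvable (an extension of a $p$-solvable group by a $p$-solvable group is $p$-solvable), so $k(G)\ge k(G/N)\ge Cp$ by induction. Hence I may assume the $p$-solvable radical of $G$ is trivial; then every minimal normal subgroup of $G$ is a direct product $N\cong T^{k}$ of copies of one nonabelian simple group $T$, and necessarily $p\mid|T|$, for otherwise $N$ would be a nontrivial normal $p'$-subgroup, hence $p$-solvable. Fix such an $N$.

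The core of the argument is then the bound $k(G)\ge k^{*}(T)$. Since $N\nor G$, Brauer's permutation lemma applied to the conjugation action of $G$ on $N$ shows that $G$ has the same number of orbits on $\Irr(N)$ as on the conjugacy classes of $N$; by Clifford theory the irreducible characters of $G$ lying over characters from distinct $G$-orbits in $\Irr(N)$ are distinct, so $k(G)=|\Irr(G)|$ is at least the number of $G$-orbits on the conjugacy classes of $N$. Now the conjugacy classes of $N\cong T^{k}$ are the $k$-tuples of classes of $T$, and the conjugation action of $G$ on them factors through $G\to\Out(N)=\Out(T)\wr\Sym(k)$. The orbits of the whole group $\Out(T)\wr\Sym(k)$ on such $k$-tuples are the size-$k$ multisets drawn from the $k^{*}(T)$ orbits of $\Out(T)$ on the classes of $T$; a subgroup has at least as many orbits, and there are at least $k^{*}(T)$ such multisets (the constant ones). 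Therefore $k(G)\ge k^{*}(T)\ge c_{p}=Cp$, and the induction goes through.

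It remains to check that $c_{p}>0$, i.e.\ that for every $N$ only finitely many nonabelian simple $T$ with $p\mid|T|$ have $k^{*}(T)\le N$; this is where the classification of finite simple groups enters, via the fact that $k^{*}(T)\to\infty$ as $|T|\to\infty$. For alternating groups $|\Out(T)|\le4$ while $k(T)$ grows like the partition function; for groups of Lie type of a fixed type $k(T)$ is at least a fixed positive power of $q$ (the power being the rank of the ambient algebraic group) while $|\Out(T)|$ is bounded by a linear function of the rank times $\log q$, so $k^{*}(T)\ge k(T)/|\Out(T)|\to\infty$; and there are only finitely many sporadic groups. I expect this uniform comparison of $|\Out(T)|$ against $k(T)$ over all simple groups to be the only real work. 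I would also stress that the dependence of $C$ on $p$ cannot be removed: for a Mersenne prime $p=2^{q}-1$ the full automorphism group $\Aut(\PSL_2(2^{q}))$ is non-$p$-solvable yet has only about $2^{q}/q$ conjugacy classes, which is of order $p/\log p$. (If one only wants some positive $C$ depending on $p$, there is a one-line argument: a non-$p$-solvable group satisfies $p\mid|G:\bF(G)|$, so Theorem~A already yields $k(G)\ge D(p/\log p)=(D/\log p)\,p$; the induction above is needed only for the much better constant $c_{p}/p$.)
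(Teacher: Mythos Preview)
Your argument is correct for the statement as literally written (with $C$ allowed to depend on $p$), and it takes a genuinely different, more elementary route than the paper. The paper first invokes Theorem~\ref{p2} to reduce to $|G|_p=p$, then reduces to an almost simple group with socle $S$ of Lie type over $\F_q$ of rank $r$, and finally appeals to the Fulman--Guralnick inequality $k(G)\ge q^r/|M(S)|$ together with Hering's tables, aiming at a \emph{universal} constant. You instead pass only to trivial $p$-solvable radical, bound $k(G)$ below by the number of $\Aut(T)$-orbits on the classes of $T$ via a clean combinatorial count, and set $C=c_p/p$. Your route avoids the heavier inputs entirely; note also that the verification ``$c_p>0$'' is trivial since $k^*(T)\ge 2$ for every nonabelian simple $T$, so the CFSG growth argument you sketch actually proves the stronger (but unneeded) statement $c_p\to\infty$.

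The substantive difference is the constant. The paper's later use of Theorem~B in Lemma~\ref{unique} (applying it at a Zsigmondy prime $l>p$ to conclude $k(G)\ge Cl>Cp$) only makes sense if $C$ is universal, and the introduction's phrase ``even better bounds'' points the same way. Your Mersenne observation is therefore significant and correct: for $p=2^q-1$ and $G=\Aut(\PSL_2(2^q))$ one computes, e.g.\ at $q=7$, $p=127$, that $k(G)=39$ (seven linear characters, seven extensions each of the Steinberg and of the one $\phi$-invariant degree-$127$ character, and $18$ induced characters from the remaining $\phi$-orbits; the squared degrees sum to $|G|=14679168$). Thus any universal $C$ would satisfy $C\le 39/127$, and the ratio tends to $0$ along Mersenne primes. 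This shows that the bound $k(G)\ge q^r/|M(S)|$ cannot hold for arbitrary almost simple $G$ once field automorphisms are present, and it incidentally contradicts Conjecture~D as stated. So your $p$-dependent conclusion is the right one. One small caveat: your parenthetical deduction of Theorem~B from Theorem~A is circular in the paper's logical order, since the proof of Theorem~A (via Lemma~\ref{unique} and Theorem~\ref{psol}) invokes Theorem~B.
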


These theorems leave open the following questions.

\begin{conC}
Let $p$ be a prime and let $G$ be a finite group. If $p$ divides $|G:\bF(G)|$, then $k(G)\geq p/\log p$.
\end{conC}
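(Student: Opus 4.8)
We do not prove Conjecture~C here; the following is a possible line of attack, whose goal is to upgrade the constant $D$ in Theorem~A (and the value $1/3$ in the solvable case) to $1$ by tracking constants through a structural induction. The plan is to argue by induction on $|G|$ with a minimal counterexample $G$, treating separately the two genuinely different regimes: non-$p$-solvable groups and solvable groups of affine type.

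First I would run the standard reductions: factor out any central subgroup contained in $G'$; replace $G$ by a proper quotient $G/N$, $N$ minimal normal, whenever $p$ still divides $|(G/N):\bF(G/N)|$; and reduce to the case $\bF(G)=\bF^{*}(G)$, with $\bF(G)$ self-centralizing when $G$ is $p$-solvable. If $G$ is \emph{not} $p$-solvable, Theorem~B gives $k(G)\geq Cp$ for an absolute constant $C$, and since $Cp\geq p/\log p$ whenever $\log p\geq 1/C$, only finitely many primes remain; for those one would refine Theorem~B using the classification to obtain $k(G)\geq p$ outright, the point being that the relevant simple groups, their covers and their almost simple extensions have $k$ growing at least linearly in $p$, leaving ample room.

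The core is the solvable case. The reductions should bring us to $G=V\rtimes H$ with $V=\bF(G)$ a faithful irreducible $\F_rH$-module and $p\mid|H|$, the extremal behaviour occurring when the action is coprime, $r\nmid|H|$ (the general faithful case being similar but more technical). Clifford theory then gives the exact count
\[
k(G)=\sum_{v}k\bigl(\bC_H(v)\bigr),
\]
the sum over representatives of the $H$-orbits on $\Irr(V)$, whose number equals that of the $H$-orbits on $V$; in particular $k(G)\geq k(H)+m$ and $k(G)\geq m+1$, where $m$ is the number of nontrivial orbits. Two mechanisms must cooperate: if $H$ has many orbits on $V$ then $m\geq p/\log p$ already, while if $H$ has few orbits then $|V|$ is large and $H$ is close to transitive, which should force $k(H)$ to be large. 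The delicate point is that $H$ need not satisfy the hypothesis of the conjecture for the same prime $p$, so ``$k(H)$ is large'' cannot simply be the inductive hypothesis applied to $H$: one needs an independent lower bound for $k(H)$ valid when $H$ acts faithfully, coprimely and nearly transitively on $V$ with $p\mid|H|$.

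The configuration to keep in mind is $G=A\Gamma(2^{q})$ with $p=2^{q}-1$ a Mersenne prime: here $V=\F_{2^q}$, $H=\Gamma L_1(2^q)$, $m=1$, $k(H)=q+(p-1)/q$, $k(G)=(p-1)/q+2q$ and $q=\log(p+1)$, so $p/\log p$ is approached with ratio tending to $1$ but never attained. This suggests that away from the Mersenne asymptotics there is slack, and that the binding estimate is the one controlling $m$ and $k(H)$ along primitive semilinear-type configurations with essentially no loss. Accordingly, I expect the main obstacle to be a sharp coprime orbit-counting statement of the following shape: if $H\leq\GL(V)$ acts faithfully with $\gcd(|V|,|H|)=1$ and $p\mid|H|$, then $H$ has at least $p/\log p$ orbits on $V$ unless $H$ has a proper normal subgroup admitting a strictly profitable reduction --- together with a classification of the near-equality cases, tied to Mersenne, Fermat and related primes, fine enough to exclude hypothetical worse configurations. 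The non-solvable and non-coprime cases, by contrast, appear to carry comfortable margins, so essentially all of the difficulty is concentrated in the primitive coprime linear-group count with optimal constants.
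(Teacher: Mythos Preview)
The paper does not prove Conjecture~C; it is explicitly stated as an open problem, and the authors remark at the end of Section~6 that ``it is not clear how to improve the constant in Theorem~\ref{prop2} even when $p$ is very large.'' So there is no proof in the paper to compare your proposal against. You correctly frame your write-up as a strategy rather than a proof, which is appropriate.

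Your reduction scheme is broadly the same as the one the paper uses for Theorem~A (Theorem~\ref{psol}): kill the Frattini subgroup, pass to a quotient in which the socle is the generalized Fitting subgroup, and split according to whether the relevant minimal normal subgroup is abelian or not. Two points deserve comment. First, you jump from ``non-$p$-solvable'' to ``solvable'' and omit the $p$-solvable non-solvable case, which the paper handles separately via Lemma~\ref{unique} and Theorem~\ref{p-solvable}; the constants there are genuinely non-explicit (they depend on \cite{baumeister} and \cite{keller}), so your remark that ``the non-solvable and non-coprime cases \dots\ appear to carry comfortable margins'' is optimistic without further work. Second, your reduction to coprime action $r\nmid|H|$ is not in the paper and would itself need justification.

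On the substantive obstacle you do identify the right place: the primitive solvable linear group case. But note that the paper's Theorem~\ref{prop2} already pushes Seager's theorem and the $\Gamma(V)$ analysis about as far as they go and still only obtains $k(HV)\geq p/(2\log p)$. Your proposed ``sharp coprime orbit-counting statement'' is precisely what is missing, and you offer no mechanism beyond what the paper already exploits. In the $\Gamma(V)$ configuration the paper balances $k(H)\geq e+pf/e$ against the orbit count $|V|/|H|$, and the factor $1/2$ arises structurally from the two-term AM--GM-type trade-off; removing it would require a genuinely new idea, not just tighter bookkeeping. So the proposal is a reasonable outline of the landscape, but it does not supply any ingredient the paper lacks.
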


\begin{conD}
Let $p$ be a prime and let $G$ be a non-$p$-solvable  group.  Then $k(G)\geq (p+5)/2$.
\end{conD}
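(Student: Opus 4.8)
This is posed as an open conjecture, so what follows is a plan of attack rather than a finished proof; the idea is to make the argument behind Theorem~B effective and to push the constant to the optimal value $1/2$ by a reduction to almost simple groups followed by a classification-based analysis, with $\PSL_{2}(p)$ as the extremal configuration one must match exactly.

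\textbf{Reduction to the almost simple case.} Let $G$ be a counterexample of least order. Since $k(G)\geq k(G/N)$ for every $N\nor G$, minimality of $|G|$ forces every proper quotient $G/N$ with $1\neq N\nor G$ to be $p$-solvable; as $G$ is not $p$-solvable and $p$-solvability is closed under extensions, it follows that every minimal normal subgroup $N$ of $G$ is non-$p$-solvable, hence $N=S^{n}$ with $S$ non-abelian simple and $p\mid|S|$. Because $N$ is nonabelian with trivial centre, $\cent{G}{N}\cap N=1$; a nontrivial $\cent{G}{N}$ would contain a minimal normal subgroup $M$ of $G$ with $G/M$ still non-$p$-solvable, contradicting minimality, so $\cent{G}{N}=1$. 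Hence $N$ is the unique minimal normal subgroup of $G$ and $N\leq G\leq\aut{N}=\aut{S}\wr\Sym(n)$. The case $n\geq 2$ is then disposed of by orbit counting — from $k(G)\geq k_{G}(N)\geq k(S)^{n}/\bigl(|\Out(S)|^{n}\,n!\bigr)$ together with the fact that there are only finitely many simple $S$ with $k(S)$ below any fixed bound (and these, with all admissible $p$ and $n$, are checked directly) — so it remains to prove: if $S\leq G\leq\aut{S}$ with $S$ simple and $p\mid|S|$, then $k(G)\geq(p+5)/2$.

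\textbf{The almost simple case via CFSG.} I would run through the classification of $S$. If $S=A_{m}$ then $p\leq m$ while $k(G)\geq\tfrac12 k(A_{m})$ grows like the partition function, so the inequality holds with a wide margin for all but finitely many $m$; the remaining $m$, the sporadic groups, and the Tits group $\tw{2}F_{4}(2)'$ form a finite list verified from character-table data. For $S$ of Lie type over $\F_{q}$ one separates two cases: if $p$ is not the defining characteristic, then $p$ divides some cyclotomic value $\Phi_{d}(q)$ with $d$ bounded in terms of the rank, so $p$ is at most roughly $q^{O(\mathrm{rk}(S))}$, whereas $k(S)$ is of the order of the number of semisimple classes of the dual group, roughly $q^{\mathrm{rk}(S)}$; if $p$ is the defining characteristic then $p\leq q$ and again $k(S)\gtrsim q^{\mathrm{rk}(S)}$. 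For $\mathrm{rk}(S)\geq 2$ these estimates beat $(p+5)/2$ — though in low rank the margin can be as small as a factor of roughly two, so the constants must be tracked with care — even before using the extra classes contributed by diagonal, graph and field automorphisms in $G\setminus S$, which only help.

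\textbf{The main obstacle.} The genuinely hard part will be the rank-one groups of Lie type in their defining characteristic — $\PSL_{2}(q)$, $\mathrm{PSU}_{3}(q)$, $\mathrm{Sz}(q)$, $\tw{2}G_{2}(q)$ — and above all $S=\PSL_{2}(p)$, the conjectured extremal example, where $k(\PSL_{2}(p))=(p+5)/2$ \emph{exactly}: there is no slack, so no soft counting argument can work and one is forced to argue with the explicit conjugacy-class numbers. Concretely, for every prime power $q$ with $p\mid q(q-1)(q+1)$ one must compare $p$, according as it divides $q$, $q-1$ or $q+1$, with the precise count of (possibly fused) semisimple and unipotent classes of $\PSL_{2}(q)$, and then repeat this for each almost simple overgroup — $\PGL_{2}(q)$, $\mathrm{P\Gamma L}_{2}(q)$ and the intermediate groups — balancing the classes lost to fusion by field automorphisms against those gained in the outer cosets. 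Making this bookkeeping come out sharp, i.e.\ attaining equality precisely at $\PSL_{2}(p)$ while staying strictly above $(p+5)/2$ everywhere else (including the small-$m$, sporadic, and low-rank checks above), is where essentially all of the difficulty lies.
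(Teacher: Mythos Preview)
The paper does not prove Conjecture~D; it is stated explicitly as an open problem, so there is no proof to compare against. You correctly recognise this and offer a strategy rather than a proof. Your plan is in the spirit of the paper's proof of the weaker Theorem~B (reduce to an almost simple group, then invoke CFSG and the Fulman--Guralnick class-count estimates), and your identification of $\PSL_{2}(p)$ as the sharp, zero-slack case is exactly the paper's motivation for the conjecture.

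Two remarks on the reduction step. First, the paper's Theorem~B proof reaches the almost simple case by first invoking Theorem~2.2 (Mar\'oti--Simion) to assume $|G|_{p}=p$, which forces $n=1$ at once; you cannot use that shortcut here because the constant $c$ in Theorem~2.2 is \emph{not} explicit for non-solvable groups, so you are right to seek an independent argument for $n\geq 2$. Second, however, your proposed bound $k_{G}(N)\geq k(S)^{n}/(|\Out(S)|^{n}\,n!)$ does not do the job as written: for fixed $S$ the right-hand side tends to $0$ as $n\to\infty$, so ``all admissible $n$'' is an infinite check, not a finite one. A cleaner route is to note that $G$-classes in $N=S^{n}$ are unions of $\Aut(S)\wr\SSS_{n}$-orbits, and these are parametrised by multisets of size $n$ drawn from the $m$ $\Aut(S)$-classes of $S$; hence $k_{G}(N)\geq\binom{m+n-1}{n}$, which is \emph{increasing} in $n$ for $n\geq 2$, so only $n=2$ needs bounding and the finiteness claim becomes genuine.

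Beyond that, your diagnosis of where the difficulty lies --- the rank-one Lie-type groups in defining characteristic, with the $\PSL_{2}$ bookkeeping having to come out exactly sharp --- is accurate and matches what the paper implicitly suggests by exhibiting $\PSL_{2}(p)$ as the extremal example.
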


Note that this would be a substantial improvement on the H\'ethelyi-K\"ulshammer bound for non-$p$-solvable groups. 

We remark that the straightforward versions of the main results in this paper for the number of irreducible characters of $p'$-degree or for the number of characters in a block do not hold. For instance, the bound $|\Irr_{p'}(G)|\geq2\sqrt{p-1}$ cannot be improved even if we assume that $p$ divides $|G:\bF(G)|$. This distinguishes our results from \cite{hk1, mar} and shows that our results are tighter, in the sense that it is not possible to replace $k(G)$ by usually smaller invariants like $|\Irr_{p'}(G)|$ or $k(B)$. 
Nevertheless, we think that they could also admit a block version and a version for the number of $p'$-degree irreducible characters. This will be discussed in the last section of the paper.

\section{Preliminaries}

All groups in this paper  will be finite.
Our notation is standard and  follows \cite{isa} and \cite{manz-wolf}. If $N$ is a normal subgroup of a group $G$, then $\Irr(G|N)$ is the set of irreducible characters of $G$ whose kernel does not contain $N$. If a group $G$ acts on an abelian group  $V$, then we write $n(G,V)$ to denote the number of orbits of $G$ on $V$.

In this section, we collect some results that will be used several times in the proofs of the main theorems. We start by recalling several  results on the number of conjugacy classes that will be used without further explicit mention.

\begin{lem}
Let $G$ be a finite group and let $N\trianglelefteq G$. Then:
\begin{enumerate}
\item
$k(G)\geq k(G/N)$.
\item
$k(G)\leq k(G/N)k(N)$.
\item
If $N$ is abelian and $C=\bC_G(N)$ then $k(G)\geq k(G/C)+(|N|-1)/|G:C|$.
\end{enumerate}
\end{lem}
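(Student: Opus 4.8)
The three statements are standard facts about $k(G)$ that I would prove by elementary counting through the relevant quotient map, so the plan is short. Throughout, write $\pi$ for the quotient homomorphism in question, and recall that a surjection sends each conjugacy class onto a conjugacy class.

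\emph{Part (i).} The image of a $G$-class under $\pi\colon G\to G/N$ is a $G/N$-class, and lifting a representative shows every $G/N$-class arises this way; hence $\pi$ induces a surjection on sets of conjugacy classes and $k(G)\ge k(G/N)$.

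\emph{Part (ii).} I would fix a $G/N$-class $\bar C$, pick $g\in G$ above it, and note that every $G$-class inside $\pi^{-1}(\bar C)$ meets the coset $gN$, that distinct such classes meet $gN$ in disjoint nonempty sets, and that each such intersection is a union of orbits of $N$ acting on $gN$ by conjugation. So the number of $G$-classes above $\bar C$ is at most the number of $N$-orbits on $gN$. Transporting that action to $N$ via $gn\mapsto n$ turns it into twisted conjugation $n\mapsto \sigma(m)\,n\,m^{-1}$, where $\sigma$ is conjugation by $g$ restricted to $N$; a Burnside count writes this number as $\tfrac1{|N|}\sum_{m\in N}\bigl|\{\,n\in N:\ {}^{n}m=\sigma(m)\,\}\bigr|$, and since each summand is $0$ or $|\bC_N(m)|$ it is at most $\tfrac1{|N|}\sum_{m}|\bC_N(m)|=k(N)$. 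Summing over the $k(G/N)$ choices of $\bar C$ gives $k(G)\le k(N)k(G/N)$. (This inequality is classical, due to Gallagher; one could simply quote it.)

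\emph{Part (iii).} Since $N$ is abelian and normal, $N\le C=\bC_G(N)\trianglelefteq G$. For $n\in N$ one has $\bC_G(n)\supseteq C$, so the $G$-class of $n$ has size at most $|G:C|$; as these classes cover $N$, the number of $G$-classes inside $N\setminus\{1\}$ is at least $(|N|-1)/|G:C|$. Together with the trivial class, at least $(|N|-1)/|G:C|+1$ conjugacy classes of $G$ lie in $N\subseteq C$ and hence map to the identity coset under $\pi\colon G\to G/C$; each of the remaining $k(G/C)-1$ classes of $G/C$ receives at least one $G$-class. Adding these contributions gives $k(G)\ge k(G/C)+(|N|-1)/|G:C|$. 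None of the steps is a serious obstacle; the only mildly delicate points are the identification of the number of $N$-orbits on $gN$ with the fixed-point sum bounded by $k(N)$ in (ii), and keeping the ``$+1$'' from the trivial class and the ``$-1$'' from the identity coset of $G/C$ matched up correctly in (iii).
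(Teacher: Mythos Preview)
Your proof is correct. For (i) and (ii) the paper simply declares them well-known and cites Gallagher for (ii), so your explicit arguments are a welcome elaboration; the twisted-conjugation Burnside count you give is exactly the standard proof behind Gallagher's inequality (the direction of $\sigma$ is off by an inverse, but this is immaterial).

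For (iii) the paper takes the dual route through characters: it observes that $|\Irr(G\mid N)|\geq(|N|-1)/|G:C|$ (each $G$-orbit on $\Irr(N)\setminus\{1_N\}$ has size at most $|G:C|$ and contributes at least one irreducible of $G$), and that $\Irr(G/C)$ and $\Irr(G\mid N)$ are disjoint inside $\Irr(G)$ because $N\leq C$. You instead count conjugacy classes directly: classes inside $N\setminus\{1\}$ have size at most $|G:C|$, giving at least $(|N|-1)/|G:C|$ of them, and the remaining $k(G/C)-1$ nontrivial classes of $G/C$ each lift. The two arguments are mirror images under the bijection $k(G)=|\Irr(G)|$; yours has the mild advantage of avoiding Clifford theory entirely, while the paper's phrasing fits more naturally with the character-theoretic language used later (e.g.\ the $\Irr(G\mid N)$ notation set up in Section~2).
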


\begin{proof}
Parts (i) and (ii) are well-known (see, for instance, \cite{gal} for (ii)) . We prove (iii). Note that $|\Irr(G|N)|\geq (|N|-1)/|G:C|$. Hence,
$$
k(G)=|\Irr(G)|\geq|\Irr(G/C)|+|\Irr(G|N)|\geq k(G/C)+(|N|-1)/|G:C|,
$$
as wanted.
\end{proof}

The following result, which is a combination of the main results of \cite{hk2} and \cite{ms}, will allow us to consider just  groups with Sylow $p$-subgroups of order $p$.

\begin{thm}
\label{p2}
There exists a constant $c$ such that for any group $G$ whose order is divisible by the square of a prime $p$, we have $k(G)\geq cp$. Furthermore, if $G$ is solvable, then we can take $c=49/60$.
\end{thm}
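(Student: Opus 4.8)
The plan is to reduce, in two independent directions, to known results in the literature: the nilpotent‑quotient theorem of H\'ethelyi–K\"ulshammer \cite{hk2} on one side, and the Malle–Mar\'oti (or Mar\'oti–Maslova, depending on which \cite{ms} refers to) bound on the number of conjugacy classes of groups divisible by $p^2$ on the other. Concretely, fix a prime $p$ with $p^2 \mid |G|$. If $G$ is $p$-solvable, then a Sylow $p$-subgroup has order at least $p^2$, and one should be able to invoke the main theorem of \cite{hk2}, which gives a lower bound for $k(G)$ of the shape $c' \sqrt{|P|}$ or $c' |P|^{\text{something}}$ when $|P| = p^a$ with $a \geq 2$; since $\sqrt{p^2} = p$, this already yields $k(G) \geq c' p$ with an explicit constant, and in the solvable case one tracks the constant carefully to land at $49/60$. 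If $G$ is not $p$-solvable, then $G$ has a composition factor $S$ of order divisible by $p$ that is a nonabelian simple group, and by the classification one gets $k(G) \geq k(S) \geq c'' p$ from the known linear-in-$p$ lower bounds for class numbers of simple groups (this is exactly the content relevant to Theorem B, which is stated but not yet proved in the excerpt, so for Theorem \ref{p2} we should instead quote \cite{ms} directly as a black box). Taking $c = \min\{c', c''\}$, or in the solvable case $c = 49/60$, finishes the argument.

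In more detail, the first reduction step I would carry out is: \emph{replace $G$ by a minimal counterexample} and use Lemma 2.1(i), $k(G) \geq k(G/N)$, together with the fact that $p^2$ still divides the order of some quotient or subgroup in which we can apply the inductive hypothesis — although since this is a black-box combination of two published theorems, the cleaner route is simply to cite them. So Step 1: if $G$ is solvable, apply the solvable case of the H\'ethelyi–K\"ulshammer result from \cite{hk2} on groups with a given Sylow $p$-subgroup; since $|P| \geq p^2$, extract the constant $49/60$. Step 2: if $G$ is not solvable (but the statement we need is about all $G$), appeal to \cite{ms}, whose main theorem provides a universal constant $c$ with $k(G) \geq cp$ whenever $p^2 \mid |G|$. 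The theorem as stated is then just the conjunction of these two, with the solvable refinement recorded separately.

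The main obstacle — or rather the only real content beyond citation — is bookkeeping the constant $49/60$ in the solvable case: one must check that the bound coming from \cite{hk2} for a Sylow $p$-subgroup of order exactly $p^2$ (the worst case, since larger Sylow subgroups only help) is at least $\tfrac{49}{60}p$, and that the reduction from $k(G)$ to a quantity controlled by $|P|$ does not lose more than this. If \cite{hk2} is phrased in terms of $k(G) \geq f(|P|)$ for an explicit function $f$, one simply evaluates $f(p^2)$ and verifies $f(p^2) \geq \tfrac{49}{60}p$ for all primes $p$; the constant $49/60$ presumably arises as the infimum of $f(p^2)/p$ over $p$, attained at the smallest relevant prime. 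For the universal (non-solvable) constant $c$, no optimization is needed: any positive constant that works in \cite{ms} is inherited verbatim, so the proof there is genuinely a one-line citation. Thus the proof is short: state the two cases, cite \cite{hk2} and \cite{ms}, and record the explicit solvable constant.
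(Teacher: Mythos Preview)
Your proposal is correct and matches the paper's treatment: the paper offers no proof at all beyond the sentence introducing the theorem as ``a combination of the main results of \cite{hk2} and \cite{ms}'', and your plan is precisely to cite these two results as black boxes. One small correction to your speculation: the constant $49/60$ is not obtained by optimizing some $f(p^2)/p$ over primes but is simply the explicit constant appearing in the main theorem of \cite{hk2} (indeed the paper later quotes the bound as $k(G)\geq (49p+1)/60$), so no bookkeeping beyond quoting that result verbatim is needed.
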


We definitely cannot take $c=49/60$ when $G$ is a non-solvable $p$-solvable group, as the examples (iv) in p. 671 of \cite{hk1} show.  This is one of the reasons why our constant $D$ in Theorem A is not explicit for non-solvable $p$-solvable groups.

The following theorem of Seager \cite{sea} will be another fundamental result to get our explicit bound in the solvable case.

\begin{thm}
\label{sea}
Let $p$ be a prime and let  $G$ be a solvable primitive subgroup of $\GL_n(p)$.  If $G$ is not permutation isomorphic to a  subgroup of $\Gamma(p^n)$, then the number of orbits of $G$ on the natural module $V$ is $$n(G,V)\geq(p^{n/2}/12n)+1,$$ 
except possibly when $p^n=17^4, 19^4, 7^6, 5^8, 7^8, 13^8, 7^9, 3^6$ and $5^{16}$.
\end{thm}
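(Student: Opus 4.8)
The plan is to reduce the estimate to a question about $|G|$ and then dispatch the large-order case using the structure theory of solvable primitive linear groups. First, observe that $n(G,V)$ is the permutation rank of the affine group $V\rtimes G$ on $V$, and record the cheap lower bound: $0$ is a fixed point and every $G$-orbit on $V\setminus\{0\}$ has size dividing $|G|$, whence
$$
n(G,V)\ \ge\ 1+\frac{p^n-1}{|G|}.
$$
So the asserted inequality holds automatically as soon as $|G|\le 12n(p^n-1)/p^{n/2}$, i.e. roughly whenever $|G|\le 12n\,p^{n/2}$. All the work therefore goes into the range where $G$ is \emph{large}, and there the point is to show that a large solvable primitive $G$ still has many orbits unless its action is that of a subgroup of $\Gamma(p^n)$.

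Next I would invoke the classical description of a solvable primitive $G\le\GL(V)$, $V=\F_p^n$ (Suprunenko; see \cite{manz-wolf}). Setting $F=\bF(G)$, one has $\cent{G}{F}=\zent{F}$ cyclic, acting as scalars over a subfield $\F_q\le\F_{p^n}$, while $F/\zent{F}$ is a direct product of symplectic-type $r_i$-groups; correspondingly $V$ factors over $\F_q$ as a tensor product $V=V_1\otimes\cdots\otimes V_s$ with $\dim_{\F_q}V_i=r_i^{m_i}$, and $G/F$ embeds into $\prod_i\mathrm{Sp}_{2m_i}(r_i)$ up to field automorphisms. Two cases now arise. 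If $V$ admits a $G$-compatible tensor factorization with two factors of dimension $>1$, then I would bound $n(G,V)$ below in terms of the orbit numbers on the smaller factors and conclude by induction on $n$. Otherwise $F$ is, up to scalars, a single symplectic-type $r$-group $R$ with $\dim_{\F_q}V=r^m$ and $n=r^m[\F_q:\F_p]$, and $G$ lies in its normalizer; when $m=0$ this $R$ is cyclic, $V$ is one-dimensional over $\F_{p^n}$, and $G\le\Gamma(p^n)$ --- the excluded situation.

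The heart of the matter is thus the case $m\ge 1$: bounding below the number of orbits on $V$ of a group $G$ with $\bF(G)$ a symplectic-type $r$-group $R$ (times scalars) and $G/R\hookrightarrow\mathrm{Sp}_{2m}(r)$ up to field automorphisms. Here $R$ by itself has at least about $p^n/r^{1+2m}$ orbits on $V$, which already dominates $p^{n/2}/(12n)$ except when $r$ and $m$ are both very small; the remaining task is to bound how much the quotient $G/R$ can fuse these $R$-orbits, using that $|\mathrm{Sp}_{2m}(r)|$ and the number of available field automorphisms are themselves controlled in terms of $r^m$. Pushing these estimates through for all but finitely many small parameter choices yields $n(G,V)\ge p^{n/2}/(12n)+1$; the residual small cases, where the symplectic factor is too large relative to the module, are exactly the source of the list $p^n=17^4,19^4,7^6,5^8,7^8,13^8,7^9,3^6,5^{16}$, and these I would check individually, if necessary by direct (computer-assisted) computation of the orbit numbers.

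The step I expect to be the main obstacle is this last one: obtaining a clean lower bound for $n(G,V)$ when $G$ is large requires an honest count of orbit fusion under $\mathrm{Sp}_{2m}(r)$ acting on the $R$-orbits, and keeping the constant uniformly equal to $1/(12n)$ forces one to be economical both in the tensor-product induction and in the symplectic estimate. Isolating cleanly the genuine small exceptions from the semilinear family $\Gamma(p^n)$ further depends on knowing the solvable primitive linear groups of near-maximal order, which is what makes the exception list finite and explicit.
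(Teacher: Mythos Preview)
The paper does not prove this statement at all: Theorem~\ref{sea} is simply quoted from Seager's paper \cite{sea} and used as a black box in the later arguments, so there is no ``paper's own proof'' to compare your proposal against.

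Your outline is a reasonable sketch of the \emph{shape} of Seager's argument --- the trivial orbit-counting bound $n(G,V)\ge 1+(|V|-1)/|G|$, the Suprunenko/Manz--Wolf structure theory for solvable primitive linear groups, the reduction to a single extraspecial tensor factor, and a residual finite check --- and indeed this is essentially how Seager proceeds. But what you have written is not a proof: you yourself flag the central step (controlling orbit fusion under the symplectic quotient so as to keep the explicit constant $1/(12n)$) as ``the main obstacle'', and your treatment of the tensor-induction step is a one-line gesture rather than an argument. Turning this outline into an actual proof with the stated constant and the stated finite exception list is exactly the content of \cite{sea}; for the purposes of the present paper the correct move is to cite Seager, as the authors do.
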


Another fundamental tool will be Zsigmondy's prime theorem (see \cite[IX,8.3]{hb}). Recall that if $a>1$ and $n$ are integers, a prime $p$ is called a Zsigmondy prime divisor for $a^n-1$ if $p$ divides $a^n-1$ but $p$ does not divide $a^j-1$ for $j<n$. (This is dependent upon $a$ and $n$, and not just $a^n-1$.)

\begin{thm}
\label{zsi}
Let $a>1$ and $n$ be positive integers. Then there exists a Zsigmondy prime divisor for $a^n-1$ unless $(a,n)=(2,6)$ or $(a,n)=(2^k-1,2)$ for some positive integer $k$.
\end{thm}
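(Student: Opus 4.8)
The plan is to obtain this from the factorization of $a^n-1$ into cyclotomic polynomials; the statement is of course also available off the shelf as \cite[IX,8.3]{hb}, but this is the shape of the argument I would give. Write $\Phi_m(x)\in\ZZ[x]$ for the $m$-th cyclotomic polynomial, so that $a^n-1=\prod_{d\mid n}\Phi_d(a)$. First I would record the basic dichotomy: if a prime $p$ divides $\Phi_n(a)$ then $p\mid a^n-1$, and a short computation with the multiplicative order of $a$ modulo $p$ shows that \emph{either} $a$ has order exactly $n$ modulo $p$ --- in which case $p$ is a Zsigmondy prime divisor for $a^n-1$ --- \emph{or else} $p\mid n$, and then, since that order divides $p-1$, the prime $p$ is forced to be the largest prime divisor $P$ of $n$. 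Consequently $\Phi_n(a)$ fails to supply a Zsigmondy prime only if every prime dividing $\Phi_n(a)$ divides $n$, i.e.\ only if $\Phi_n(a)$ is a power of $P$; so it is enough to rule that out.

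For this I would use two estimates. The first is a lifting-the-exponent bound: when $P\mid n$ and $P\mid\Phi_n(a)$ one has $v_P(\Phi_n(a))=1$, the only exception being $n=2$, where $\Phi_2(a)=a+1$ can be divisible by a large power of $2$. Thus for $n\ge3$, either $\Phi_n(a)$ has a Zsigmondy prime divisor or $\Phi_n(a)=P$. The second is a lower bound for the size of $\Phi_n(a)$: from $\Phi_n(a)=\prod_\zeta|a-\zeta|$ over the primitive $n$-th roots of unity $\zeta\in\CC$, one has $|a-\zeta|\ge a-1\ge1$, and, pairing complex conjugates and using $|a-\zeta|^2=a^2-2a\cos\theta+1$, one sharpens this enough to get $\Phi_n(a)>n\ge P$ for every $(a,n)$ with $n\ge3$ outside a short explicit list of small cases (the crude bound $(a-1)^{\varphi(n)}$ already suffices when $a\ge3$ and $\varphi(n)$ is not tiny, and $a=2$ is handled by a direct estimate of $\Phi_n(2)$). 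Running through that finite list isolates $(a,n)=(2,6)$, where $\Phi_6(2)=3=P$, as the unique genuine failure with $n\ge3$. Finally I would dispose of $n=1$ and $n=2$ by hand: for $n=1$ the Zsigmondy condition is vacuous, while for $n=2$ a Zsigmondy prime is precisely an odd prime dividing $a+1$, which exists unless $a+1$ is a power of $2$, that is, unless $a=2^k-1$.

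The hard part will be the two ``local'' inputs: establishing the lifting-the-exponent bound $v_P(\Phi_n(a))\le1$ together with its single exception, and proving a lower bound on $\Phi_n(a)$ quantitative enough that the set of $(a,n)$ with $n\ge3$ and $\Phi_n(a)\le n$ is both finite and short enough to examine directly. Once those are in place, the remainder is just bookkeeping with the identity $a^n-1=\prod_{d\mid n}\Phi_d(a)$ and the order dichotomy above.
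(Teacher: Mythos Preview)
The paper does not prove this theorem at all: it simply quotes Zsigmondy's theorem with the reference \cite[IX,8.3]{hb} and moves on, so there is no ``paper's own proof'' to compare against. Your outline is the standard cyclotomic-polynomial argument (essentially the Birkhoff--Vandiver proof), and its skeleton is correct: the order dichotomy for primes dividing $\Phi_n(a)$, the bound $v_P(\Phi_n(a))\le 1$ for $n\ge 3$, and a size estimate forcing $\Phi_n(a)>P$ outside a finite checkable list are exactly the three ingredients one needs.

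One small wrinkle: your treatment of $n=1$ is a little too quick. You correctly observe that the ``not dividing $a^j-1$ for $j<1$'' clause is vacuous, so any prime divisor of $a-1$ counts; but when $a=2$ there is no such prime. Strictly speaking $(a,n)=(2,1)$ is a further degenerate exception that the paper's statement (and many textbook statements) silently omits, so this is as much an issue with the formulation as with your sketch. Otherwise the two ``hard'' local inputs you flag are genuine but routine, and your plan would go through.
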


We will also use several times the following consequence of the Fong-Swan theorem, that could be useful for other purposes.

\begin{lem}
\label{fs}
Let $G$ be a solvable group, and let $V$ be a faithful irreducible $G$-module over the field with $q$ elements, where $q$ is a prime. Then $|V|\geq q^d$, where $d$ is the smallest dimension of a faithful complex representation of $G$.
\end{lem}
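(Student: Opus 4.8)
The plan is to reduce the statement to ordinary (characteristic-zero) representation theory via the Fong–Swan theorem. Let $V$ be a faithful irreducible $G$-module over $\F_q$ with $q$ prime, and let $d$ denote the smallest dimension of a faithful complex representation of $G$. Since the prime field $\F_q$ has no proper subfields, the $\F_q$-module $V$ is automatically absolutely irreducible or becomes a sum of Galois conjugates over $\overline{\F_q}$; in any case it is enough to produce a faithful $\overline{\F_q}G$-module of dimension at least $d$, since then $\dim_{\F_q} V$ is also at least $d$ and $|V|\ge q^d$. So I would first pass to an irreducible constituent $W$ of $V\otimes_{\F_q}\overline{\F_q}$; because $V$ is faithful, the kernel of $W$ is a proper normal subgroup $N$ of $G$, and $\dim W = \dim_{\F_q} V$ if $V$ is absolutely irreducible, while in general $\dim_{\F_q}V$ is a multiple of $\dim W$. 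The point is that it suffices to bound $\dim W$ below by the smallest faithful complex degree of $G/N$, and then bootstrap from $G/N$ back to $G$.

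The heart of the argument is Fong–Swan: since $G$ is solvable, the Brauer character of the $\overline{\F_q}G$-module $W$ lifts to an ordinary character $\chi\in\Irr(G)$ with the same degree, $\chi(1)=\dim W$. This $\chi$ need not be faithful, but its kernel $K=\ker\chi$ is a $p'$-group (here $p=q$), because a Brauer character and its Fong–Swan lift agree on $p$-regular elements and $W$ is faithful, so any element in $\ker\chi$ acts trivially on $W$, forcing $K\le N$; more precisely one checks $K\cap (\text{$p$-elements})=1$. Now I would argue that $G$ still has a faithful complex character of degree at most $\chi(1)$: indeed, if $G$ were $p$-solvable we could invoke a standard result (e.g.\ that for a solvable group a faithful module over $\F_p$ of dimension $n$ yields a faithful complex module of dimension at most $n$ via lifting the individual Sylow-tower pieces), but the cleanest route is to observe that since $K$ has order prime to $p$ and $G$ acts faithfully on the $\F_p$-module $V$, the restriction $V_K$ is a faithful $\F_p K$-module, hence $K$ is isomorphic to a subgroup of $\GL(V)$ whose order is coprime to $p$, and then $K$ embeds in a torus; combining a faithful complex representation of $K$ of the appropriate degree with $\chi$ (inflated suitably, or tensored with a faithful character of an abelian group into which $K$ maps) produces a faithful complex representation of $G$ of degree $\chi(1)$. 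Therefore $d\le\chi(1)=\dim W\le\dim_{\F_q}V$, which gives $|V|=q^{\dim_{\F_q}V}\ge q^d$.

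I expect the main obstacle to be the bookkeeping in the last step: ensuring that the Fong–Swan lift $\chi$, which may have a nontrivial (but $p'$) kernel, can genuinely be upgraded to a \emph{faithful} complex character of $G$ of the same degree without increasing the degree. The key observation making this work is that the only way faithfulness is lost under lifting is through a $p'$-normal subgroup, and such a subgroup, acting faithfully on $V$ in coprime characteristic, is abelian-by-restriction enough to be ``repaired'' by tensoring with a one-dimensional-per-factor faithful character of an appropriate abelian quotient — a move that does not change the degree. An alternative, perhaps slicker, formulation is to induct on $|G|$: if $\chi$ is already faithful we are done; otherwise replace $G$ by $G/\ker\chi$ has smaller order, apply induction to get a faithful complex representation of $G/\ker\chi$ of degree $\le\chi(1)$, and then use that $\ker\chi$ is a $p'$-group to lift across the extension. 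Either way, the numerical conclusion $|V|\ge q^d$ follows, completing the proof.
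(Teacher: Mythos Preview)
Your instinct to use Fong--Swan is correct, and it is exactly what the paper does. But the argument goes off the rails at the kernel analysis, and the subsequent ``repair'' machinery is built on a reversed inequality.

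Here is the issue. You write that $K=\ker\chi$ is a $q'$-group (you set $p=q$), arguing that since $\chi$ and the Brauer character of $W$ agree on $q$-regular elements and $W$ is faithful, one gets $K\cap\{q\text{-elements}\}=1$. This is backwards. What the agreement on $q$-regular elements actually gives you is that any \emph{$q$-regular} element of $K$ acts trivially on $W$, hence is trivial; so every nontrivial element of $K$ has order divisible by $q$, i.e.\ $K$ is a $q$-group, not a $q'$-group. (There is also an internal inconsistency: earlier you allow $\ker W=N$ to be a nontrivial proper subgroup, then a few lines later you invoke ``$W$ is faithful''. In fact the Galois conjugates comprising $V\otimes\overline{\F_q}$ all have the same kernel, so faithfulness of $V$ forces each $W$ to be faithful; you should say this explicitly.) Once you have that $K$ is a normal $q$-subgroup of $G$, you are done immediately: a faithful irreducible $\F_q G$-module forces $O_q(G)=1$ (the fixed points of a normal $q$-subgroup form a nonzero submodule), so $K=1$ and $\chi$ is already faithful. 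No tensoring, no induction, no bootstrapping is needed; all of that discussion is both unnecessary and, as written, based on the false premise that $K$ is a $q'$-group.

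For comparison, the paper avoids even this much case analysis by lifting not a single constituent $W$ but the \emph{entire} Brauer character of $V\otimes\overline{\F_q}$, which is a multiplicity-free sum of irreducible Brauer characters (the Galois conjugates are pairwise distinct). Applying Fong--Swan termwise yields a complex character $\chi$ of degree exactly $n=\dim_{\F_q}V$ whose restriction to $q'$-elements is the full (faithful) Brauer character of $V$; then $\ker\chi\le O_q(G)=1$ as above. This is a one-paragraph proof. Your single-constituent route can be salvaged with the correction above and would in fact give the sharper bound $d\le\dim_{\overline{\F_q}}W$, but you must fix the kernel argument.
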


\begin{proof}
Let $F$ be an algebraically closed field in characteristic $q$.
Write $|V|=q^n$. Let $\mathcal{X}$ be the $\GF(q)$-representation associated to $V$ and let  $\mathcal{X}^F$ be the representation  $\mathcal{X}$ viewed as an $F$-representation. 
By Theorem 9.21 of \cite{isa},  $\mathcal{X}^F$ decomposes as a sum of pairwise different irreducible $F$-representations. 
Therefore, if  $\varphi$ is the (faithful) $q$-Brauer character associated to  $\mathcal{X}^F$  we have that  $\varphi(1)=n$ and $\varphi$ is a sum of pairwise different irreducible $q$-Brauer characters. By the Fong-Swan theorem (Theorem 10.1 of \cite{nav}) all these irreducible Brauer characters can be lifted to complex irreducible characters. Therefore, $\varphi=\chi^{\circ}$ for some faithful complex character $\chi$. The result follows.
\end{proof}

We will need the  following calculus exercise together with \cite{hk1}  in the proof of the solvable case of Theorem A.

\begin{lem}
\label{cal}
If $2\leq x<1800$, then $2\sqrt{x-1}>x/(2\log x)$. If $2<x\leq 4936$ then  $2\sqrt{x-1}>0.3492x/\log x$.
\end{lem}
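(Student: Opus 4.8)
The plan is to handle both inequalities by the same reduction, since each asserts $2\sqrt{x-1}>\alpha x/\log x$ on an interval $[2,M]$, with $(\alpha,M)=(1/2,1800)$ in the first case and $(\alpha,M)=(0.3492,4936)$ in the second (recall $\log=\log_2$). As both sides are positive on the relevant range, I would square, divide by $4(x-1)$, and use $x^2/(x-1)=x+1+1/(x-1)$ to reduce the claim to
\[
(\log x)^2>\frac{\alpha^2}{4}\Bigl(x+1+\frac{1}{x-1}\Bigr),
\]
and then substitute $y=\log x$ (so $x=2^y$ and $y$ runs over $[1,\log M]$) to reach the equivalent inequality $\phi(y)>0$, where
\[
\phi(y)=c\,y^2-2^y-1-\frac{1}{2^y-1},\qquad c:=\frac{4}{\alpha^2}.
\]
Thus $c=16$ in the first case and $c=4/(0.3492)^2\approx 32.80$ in the second.

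The next step is to show that $\phi$ is strictly increasing up to a single point of $[1,\infty)$ and strictly decreasing thereafter, so that its minimum over a closed subinterval $[1,Y]$ is attained at an endpoint. This follows from a short chain of elementary estimates: one computes $\phi'(1)=2c>0$ and notes $\phi'(y)\to-\infty$; one checks that
\[
\phi''(y)=2c-(\ln 2)^2\Bigl(2^y+\frac{2^y(2^y+1)}{(2^y-1)^3}\Bigr)
\]
is itself increasing-then-decreasing in $y$ — because $t\mapsto t+t(t+1)/(t-1)^3$ is decreasing-then-increasing on $[2,\infty)$ — and is positive at $y=1$ (since $2c>8(\ln 2)^2$); hence $\phi''$, and then $\phi'$, changes sign exactly once on $[1,\infty)$, which gives the asserted shape of $\phi$. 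It then remains to verify $\phi(1)>0$ and $\phi(\log M)>0$, which yields $\phi>0$ throughout $[1,\log M]$ and, unwinding the reduction, the desired inequality. The first is immediate since $\phi(1)=c-4$. The second is a finite numerical check: $\phi(\log 1800)=16(\log 1800)^2-1801-1/1799\approx 16\,(10.814)^2-1801\approx 70>0$, and $\phi(\log 4936)=(4/0.3492^2)(\log 4936)^2-4937-1/4935\approx 32.80\,(12.269)^2-4937>0$.

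The only real obstacle is that both inequalities are essentially sharp at the right endpoint: equality in $2\sqrt{x-1}=\alpha x/\log x$ occurs near $x\approx 1895$ in the first case and near $x\approx 4937$ in the second, which is exactly why the constant $0.3492$ and the bound $4936$ take the values they do. Consequently the check $\phi(\log M)>0$ in the second case leaves only a small positive margin, and one cannot afford the crude estimate $1/(2^y-1)\le 1$ near that endpoint; it suffices to use $1/(2^y-1)\le 1/3$ (valid since $2^y\ge 4$ on the relevant part of the range), or simply to keep the term as written. If one wishes to avoid the monotonicity analysis of $\phi$ altogether, an alternative is to rewrite the claim as $\log x>\alpha x/(2\sqrt{x-1})$, observe that on $[2,\infty)$ both $\log x$ and $\alpha x/(2\sqrt{x-1})$ are increasing (the derivative of the latter being a positive multiple of $(x-2)(x-1)^{-3/2}$), partition $[2,M]$ into finitely many subintervals $[a,b]$, and on each use the sufficient condition $\log a\ge \alpha b/(2\sqrt{b-1})$; a handful of pieces suffices, the one adjacent to $M$ being the delicate one.
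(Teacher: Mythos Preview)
Your proof is correct. The paper itself does not supply a proof of this lemma; it is stated there as a ``calculus exercise'' and left to the reader, so there is no argument in the paper to compare against.

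Your reduction to $\phi(y)=cy^{2}-2^{y}-1-1/(2^{y}-1)$ and the unimodality argument via $\phi''$ are sound: the computation $\phi''(y)=2c-(\ln 2)^{2}\bigl(2^{y}+2^{y}(2^{y}+1)/(2^{y}-1)^{3}\bigr)$ is right, and one checks that $g(t)=t+t(t+1)/(t-1)^{3}$ has $g'(t)=1-(t^{2}+4t+1)/(t-1)^{4}$, which changes sign exactly once on $[2,\infty)$ (the cubic $t^{3}-4t^{2}+5t-8$ has a unique positive root, near $t\approx 3.2$). Hence $\phi''$, then $\phi'$, changes sign once, giving the increasing--then--decreasing shape, and the endpoint checks $\phi(1)=c-4>0$ and $\phi(\log M)>0$ finish it. Your numerical estimates at the right endpoints are accurate (about $70$ in the first case and about $1$ in the second), and your remark that the second case is nearly sharp---equality holding at $x\approx 4936.03$, which is exactly how the constant $0.3492$ was chosen later in the paper---is on target. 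The alternative partition-and-compare approach you sketch at the end would also work and is closer in spirit to what one would do by hand or by computer, but the unimodality argument is cleaner.
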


\section{Proof of Theorem B}

In this short section, we prove Theorem B. It is an easy consequence of results of Fulman and Guralnick on the number of conjugacy classes of simple groups of Lie type (and the classification of finite simple groups).

\begin{proof}[Proof of Theorem B]
By Theorem \ref{p2},  we may assume that $|G|_p=p$. Let $M/N=S$ be the chief factor of $G$ whose order is divisible by $p$. Arguing by induction, we may assume that $N=1$ and $\bC_G(M)=1$. Therefore, $M$ is the unique minimal normal subgroup of $G$, so $G$ is an almost simple group with socle $S$. Using the CFSG, we may assume that $S$ is of Lie type of rank $r$ over a field of size $q$, for some integers $r$ and $q$ (because there are finitely many sporadic groups, so we can ignore them and it is easy to see that if $S$ is an alternating group, then the number of conjugacy classes of $G$ is at least  the largest prime divisor of $|S|$).

Now, it follows from \cite{fg} that $k(G)\geq q^r/|M(S)|$, where $M(S)$ is the Schur multiplier of $G$. The result follows from inspection of Table 2 of \cite{her}.
 \end{proof}

\section{$p$-solvable groups}

In this section, we prove the first part of Theorem A. We start with a particular case.

\begin{lem}
\label{unique}
Let $G$ be a finite group with a unique minimal normal subgroup $N$. Suppose that $|G|_p=p$ and $N=S\times\cdots\times S$, where $S$ is a non-abelian simple $p'$-group. Then $k(G)> Cp$, where $C$ is the constant that appears in Theorem B.
\end{lem}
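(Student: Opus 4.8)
The plan is to exploit the very special hypotheses: $N = S \times \cdots \times S$ ($m$ copies) is the unique minimal normal subgroup, so $\bC_G(N) \le N$, and since $N$ is non-abelian this forces $\bC_G(N) = 1$, i.e. $G$ embeds in $\Aut(N) = \Aut(S) \wr \Sym_m$. Write $p \mid |G|$ with $|G|_p = p$; since $S$ is a $p'$-group, $p$ divides $|G : N|$ and hence $p$ divides the order of $G/N \le \Out(S) \wr \Sym_m$. The key dichotomy is whether the prime $p$ "lives in the $\Out(S)$ part" or "in the $\Sym_m$ part" of this wreath product.

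**Main case: $p \mid m!$.** First I would handle the situation where $p$ divides $|\Sym_m|$. Let $P \in \syl{p}{G}$, so $|P| = p$, and consider the image $\bar P$ of $P$ in $\Sym_m$ under the projection $G/N \to \Out(S) \wr \Sym_m \to \Sym_m$. If $\bar P \ne 1$ then $P$ acts on the $m$ coordinates as a product of $p$-cycles, moving at least $p$ of them. One then counts conjugacy classes of $G$ that are "detected" by this action: picking $p$ coordinates permuted cyclically by $P$ and choosing elements $s \in S$ in the first coordinate, the diagonal-type elements $(s, 1, \ldots, 1)$ in those $p$ coordinates, together with the factorwise structure, produce many $G$-classes — concretely, using Lemma 2.1(ii)/(iii) and a Clifford-theoretic count of $\Irr(G \mid N)$, one gets that $k(G)$ grows at least like $k(S)^{(\text{number of }P\text{-orbits})}$, which is already $\gg p$. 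The cleanest route is probably: the subgroup $N\langle x\rangle$ for $x$ a generator of $P$ has $\bC_{N}(x)$ a diagonal copy of $S^{m/p}\cdot(\text{fixed coords})$, and $k(N\langle x\rangle) \ge k(\bC_N(x)) \ge k(S) \ge$ (a prime dividing $|S|$) which is at least... — hmm, this only gives something bounded below by a divisor of $|S|$, not obviously by $p$. So more care is needed here; see the obstacle paragraph.

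**Main case: $p \nmid m!$.** In this case $p$ must divide $|\Out(S)|$ for the simple group $S$. Since the outer automorphism groups of simple groups are extremely restricted (for groups of Lie type $|\Out(S)|$ is essentially $d \cdot f \cdot g$ with $d$ small, $f$ the field-automorphism degree, $g$ the graph part, and for alternating/sporadic $|\Out(S)| \le 4$), a large prime $p \mid |\Out(S)|$ forces $S$ to be of Lie type over $\F_{q}$ with $p \mid f$, i.e. $p$ divides the number of field automorphisms. Then $G$ has a section isomorphic to an almost simple group $S\langle \phi\rangle$ with $\phi$ a field automorphism of order $p$, and by Lemma 2.1(i) we may pass to the subgroup $R = N\rtimes\langle\phi\rangle$ projected appropriately. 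Now $\langle\phi\rangle$ acts on each simple factor $S$ as a field automorphism of order $p$, and I would invoke (or reprove via) Theorem B applied to the almost simple group $S \cdot \langle \phi \rangle$: the number of $\langle\phi\rangle$-classes inside $S$, equivalently $k(S\rtimes\langle\phi\rangle) - k(\text{rest})$, is at least $C'p$ because fixed points of a field automorphism of order $p$ on a group of Lie type over $\F_{q}$ form a group of Lie type over $\F_{q^{1/p}}$ whose class number is controlled by Fulman–Guralnick bounds, and one checks $q^{r/p}$-type quantities dominate $p$. Then Clifford theory over $N$ (Lemma 2.1(iii) with $N$ replaced by the relevant section, or a direct induction) transfers this to $k(G) \ge Cp$.

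**The main obstacle.** The genuinely delicate point is the $\Sym_m$ case when $p$ is large relative to $|S|$: one cannot simply bound $k(G)$ below by $k(S)$ since $k(S)$ need not exceed $p$. The resolution must use that $P$ permutes $m \ge p$ coordinates and therefore $N$ has at least $|S|^p$ elements on which $P$ acts with orbits of size $1$ or $p$; the number of $P$-orbits on $N$ is then at least $(|S|^p - |S|^{m/p\cdot(\text{something})})/p \ge (|S|^p - |S|^{m-p+1})/p$, which is enormous, and each such orbit contributes (via Clifford theory / Lemma 2.1) to distinct classes of $N P$, giving $k(NP) \gg |S|^{p-1}/p \gg p$. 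Making the Clifford-theoretic bookkeeping precise — ensuring these orbit counts really do produce $\ge Cp$ genuinely distinct $G$-classes after fusion in $G/N$ — is where the actual work lies, and I expect the argument to route through $n(G/N \cap \Sym_m\text{-part}, N)$ together with the elementary bound $k(H) \ge$ (number of $H$-orbits on any $H$-set)/(bounded correction). Throughout, one should reduce to $N = S^p$ with $G = S \wr \langle x\rangle$, $x$ a $p$-cycle, as the worst case, since extra coordinates and extra outer automorphisms only increase $k(G)$ by Lemma 2.1(i).
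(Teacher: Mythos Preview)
Your dichotomy is the same as the paper's, but both cases are handled much more simply there, and your arguments have genuine gaps.

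\textbf{The $\Sym_m$ case.} You correctly reach $m\geq p$, but then launch into orbit counts on $N$ and worry about fusion in $G/N$. The paper sidesteps all of this with a one-line invariant: fix $1\neq x\in S$ and consider the elements
\[
(x,1,\dots,1),\ (x,x,1,\dots,1),\ \dots,\ (x,\dots,x)\in N.
\]
Conjugation by any element of $\Aut(S)\wr\SSS_m$ preserves the number of non-identity coordinates, so these $m\geq p$ elements lie in pairwise distinct $G$-classes and we are done. Your attempted reduction to the ``worst case'' $G=S\wr\langle x\rangle$ via Lemma~2.1(i) is not valid: that lemma controls quotients, not passage to subgroups, and enlarging $G$ inside $\Aut(N)$ can certainly decrease $k(G)$.

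\textbf{The $\Out(S)$ case.} Here you try to apply Theorem~B to $S\langle\phi\rangle$, but that group is $p$-solvable (since $S$ is a $p'$-group and $|\langle\phi\rangle|=p$), so Theorem~B does not apply to it for the prime $p$. The paper instead switches primes via Zsigmondy: if $p$ divides the field degree $a$ of $S$ over $\F_q$, take a Zsigmondy prime $l$ for $q^a-1$. Then $l\equiv 1\pmod a$, so $l\geq a+1>p$, and $l$ divides $|S|$. Now $G$ is non-$l$-solvable, Theorem~B applies with the prime $l$, and $k(G)\geq Cl>Cp$. This replaces your fixed-point/Fulman--Guralnick computation entirely.
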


\begin{proof}
We know that $G$ is isomorphic to  a subgroup of $\Aut(N)=\Aut(S)\wr\SSS_n$, where $n$ is the number of direct factors isomorphic to $S$ that appear in $N$. Let $M=G\cap\Aut(S)^n$. We distinguish two cases.

Suppose first that $p$ divides $|G/M|$. Since $G/M$ is isomorphic to a subgroup of $\SSS_n$, we deduce that $n\geq p$. Hence if we fix any $1\neq x\in S$, we have that $(x,1,\dots,1), (x,x,1,\dots,1),\dots,(x,\dots,x)\in N$ are representatives of at least $p$ different conjugacy classes of $G$. The result follows in this case.

Now, we may assume that $p$ divides $|M/N|$. This implies that $p$ divides $|\Out(S)|$. Since $p$ does not divide $|S|$, it follows from the classification of finite simple groups that $S$ is a group of Lie type over the field with $q^a$ elements for some prime $q$ and some positive integer $a$ that is divisible by $p$. By the order formulae of the simple groups of Lie type (see \cite{atl}), $q^a-1$ divides $|S|$. If $l$ is a Zsigmondy prime divisor for $q^a-1$, then $q$ has order $a$ modulo $l$, so $l\equiv 1\pmod{a}$.  In particular, $l\geq a+1>p$. Applying Theorem B with the prime $l$, we deduce that there exists a constant $C$ such that $k(G)\geq Cl>Cp$, as wanted.
\end{proof}



Next, we handle  the critical case.

 \begin{thm}\label{p-solvable} There exists a universal constant $A$ such that the following holds.
Let $p$ be a prime and $G$ a  group such that the Sylow $p$-subgroup of $G$ 
is normal in $G$ and cyclic of order $p$. Let $V$ be a faithful $G$-module over $GF(q)$ where
$q$ is a prime different from $p$.  Then
\[k(GV) \geq   A\ \frac{p}{\log p}.\]
\end{thm}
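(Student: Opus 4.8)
The plan is to analyze $G$ through its action on $V$ by stratifying according to whether $V$ is an irreducible, and then a primitive, $G$-module, and at each reduction step to produce enough conjugacy classes in $GV$ to win. First I would argue that it suffices to treat the case $V$ irreducible: if $V = V_1 \oplus \cdots \oplus V_t$ as $G$-module, then $G$ acts faithfully on some $V_i$ and already $k(GV) \geq k(GV_i)$ by Lemma 2.2(i) (after replacing $V$ by the quotient $GV/(\oplus_{j\neq i} V_j)$), so induction on $\dim V$ reduces us to $V$ irreducible. Next I would pass from irreducible to primitive: if $V$ is imprimitive, say induced from a subgroup $H < G$ of index $m > 1$ acting on $W$ with $V = W^G$, then $G$ permutes the $m$ homogeneous blocks and either $p \mid m$ — in which case the block permutation gives us $\geq p$ classes directly, as in the proof of Lemma 4.2 — or $p \nmid m$, so a Sylow $p$-subgroup of $G$ (which is normal of order $p$) lies in the stabilizer of each block and hence acts faithfully on $W$; then the inertia subgroup argument lets us descend to a primitive constituent while controlling $k$ by Lemma 2.2, via Clifford theory. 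The remaining case is $V$ a faithful primitive $G$-module over $\GF(q)$.

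For $V$ primitive, the strategy splits on whether $O_p(G)$ acts irreducibly (hence, being central of order $p$, acts by scalars). Here is where Zsigmondy (Theorem 2.6) and Seager (Theorem 2.4) do the heavy lifting. If $p \mid q^d - 1$ for $d = \dim V$ small relative to the order of $q$ mod $p$... more precisely: let $e$ be the multiplicative order of $q$ modulo $p$, so $e \mid \dim V$ and $q^e \geq p - 1$. If $G$ is (permutation isomorphic to) a subgroup of the semilinear group $\Gamma(q^n)$ with $n = \dim V$, then one computes $k(GV)$ directly — this is exactly the $A\Gamma(2^q)$ computation from the introduction — and gets a bound of order $p/\log p$ with a good constant, using that $|\Gamma(q^n)| = n(q^n-1)$ and the orbit count $n(G,V) \geq (q^n - 1)/(n \cdot |G:G\cap\GF(q^n)^\times|) + \text{(fixed point)}$, combined with Lemma 2.2(iii) applied to $V$. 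If $G$ is not inside such a semilinear group, then (in the solvable case) Seager's theorem gives $n(G,V) \geq q^{n/2}/(12n) + 1$, and since $q^n \geq q^e \geq p-1$ we get $n(G,V) \gtrsim \sqrt{p}/\log p$, which is more than enough; the finitely many exceptional $(p,n)$ in Seager's list are absorbed into the constant $A$. For non-solvable $G$ (recall $G$ need not be solvable here, only $p$-solvable via $O_p(G)$ normal), I would instead invoke Theorem 2.3 (the $p^2 \mid |G|$ case is excluded since $|G|_p = p$, so this doesn't directly apply) — rather, I would use Lemma 4.2 together with the structure of $G/O_p(G)$ acting on $V$: the generalized Fitting subgroup analysis reduces the non-solvable primitive case to a bounded list where a Zsigmondy prime $\ell$ for the relevant $q^a - 1$ satisfies $\ell > p$ and Theorem B yields $k(GV) \geq C\ell > Cp$.

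The main obstacle I anticipate is the primitive non-solvable case: controlling the quasisimple components of the generalized Fitting subgroup of $G/O_p(G)$ and their action on $V \otimes \overline{\GF(q)}$ finely enough to either locate a large prime (bigger than $p$) dividing a conjugacy-class-producing factor, or to directly count $\geq Cp$ classes. This is where I would lean hardest on Lemma 4.2 and on the Fong–Swan reduction (Lemma 2.5) to compare the modular dimension $\dim V$ with the smallest faithful complex dimension $d$, forcing $q^d \leq |V|$ and hence making the Zsigmondy prime for $q^d - 1$ exceed $p$. A secondary technical annoyance will be bookkeeping the constant $A$ through the several reduction steps and through the finite exceptional lists in Theorems 2.3 and 2.4 — but since only a universal constant is claimed (the explicit $D = 1/3$ being promised only in the solvable case, handled separately later), these finitely many configurations contribute nothing essential and can be swept into $A$.
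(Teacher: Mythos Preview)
Your plan is essentially the strategy the paper employs in Sections~5 and~6 to obtain the explicit constant in the \emph{solvable} case, and for solvable $G$ it would go through. But Theorem~\ref{p-solvable} is stated for arbitrary $G$, and the non-solvable primitive case---which you yourself flag as the main obstacle---has a genuine gap. Seager's theorem (Theorem~\ref{sea}) requires $G$ solvable, so it is unavailable; the Fong--Swan lemma (Lemma~\ref{fs}) likewise needs solvability, so the inequality $|V|\ge q^d$ you want is not justified for general $G$; and your fallback via Lemma~\ref{unique} does not fit the hypotheses here, since in Theorem~\ref{p-solvable} the group $G$ has $P$ normal of order $p$ sitting at the bottom, not a nonabelian $p'$-socle. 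There is no evident Zsigmondy prime $\ell>p$ to extract in this configuration, and ``the generalized Fitting subgroup analysis reduces the non-solvable primitive case to a bounded list'' is not substantiated.

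The paper's argument is completely different and never touches primitivity or Seager. After reducing to $V$ irreducible, it sets $a=|G:\mathbf{C}_G(P)|\mid p-1$ and notes $k(GV)\ge k(G)\ge a+(p-1)/a$, which already wins unless $a$ is of intermediate size. For such $a$, Fong--Swan is applied not to $G$ but to the \emph{solvable subgroup} $P\langle g\rangle$ (where $g\mathbf{C}_G(P)$ generates the cyclic quotient), giving $|V|\ge 2^a$. Then, with $S$ the solvable radical of $G$, Keller's bound yields $k(SV)\ge |SV|^\beta\ge 2^{a\beta}$, while the Baumeister--Mar\'oti--Tong-Viet inequality for groups with trivial solvable radical gives $|G/S|\le 2^{\delta_0(\log k(G/S))^4}$; a short count shows $k(G/S)\le a^2$, whence $|G/S|\le 2^{16\delta_0(\log a)^4}$. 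In a minimal counterexample this forces $\log p\ge a\beta-16\delta_0(\log a)^4\ge(\beta/2)a$, so $a\le(2/\beta)\log p$ and then $(p-1)/a$ delivers the bound. The idea you are missing is precisely this splitting along the solvable radical, which lets one handle the non-solvable part of $G$ without any primitive-linear-group classification.
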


\begin{proof} We first define a constant $A$ that will do the job.
Let $\beta$ be as in \cite[Theorem B]{keller}. Moreover let $\delta_0=1/\delta$, where $\delta$ is the constant
one gets from \cite[Theorem 2.1]{baumeister} if one chooses $\epsilon =1$ there. Now choose $A>0$ sufficiently small such
that it satisfies the following conditions:\\
\[  (i)\ \ \ A\leq \frac{\beta}{2}\ \frac{p-1}{p}; \ \ \ \ (ii)\ \ \  \log(1/A)>e^4, \ \mbox{and}\ \ (iii)\ \ \ \frac{\log(1/A)}{(\log\log (1/A))^4}\geq \frac{8\delta_0}{\beta}.\]

First note that there is nothing to prove if $A p/\log  p\leq 1$, so we may assume that $p/\log p\geq 1/A$
and thus $p>1/A$. Since the function $f(x)=x/(\log x)^4$ is strictly increasing for $x>e^4$, we see that with $(ii)$ and $(iii)$ we
obtain
\[\frac{\log(p)}{(\log\log (p))^4}\geq \frac{8\delta_0}{\beta}.\]
Hence for any real number $a_0$ with $a_0\geq \log(p)$ we get the inequality
\[ \frac{\log(a_0)}{(\log\log (a_0))^4}\geq \frac{8\delta_0}{\beta},\]
which can easily be seen to be equivalent to
\[ (*)\ \ \  \beta a_0 -16\delta_0(\log a_0)^4\geq (\beta/2) a_0.\]

Now let $GV$ be a counterexample of minimal order. 
 Let $P$ be the Sylow $p$-subgroup of $G$ and observe that since $P$ is normal and cyclic we see that 
$V_P=[P,V]\oplus C_V(P)$, and $P$ acts frobeniusly on $[P,V]$. By induction we may assume that $V=[P,V]$,
and so $P$ acts frobeniusly on $V$. Now if $0<V_1<V$ is a $G$-module, then $G$ also acts on $V/V_1$ and $P$ acts
frobeniusly on $V/V_1$, and so by induction we are done again. Hence no such $V_1$ exists, and hence $V$ is irreducible
as $G$-module.

Write $M=\bC_G(P)$ and note that $M=K\times P$ for a Hall $p'$-subgroup $K$ of $M$. Then $G/M$ is isomorphic to a subgroup of 
Aut($P$), and hence if we write $a=|G/M|$, then $a$ divides $p-1$, and $G/M$ is cyclic of order $a$. Now we see that 
\[k(GV)\geq k(G)\geq a + \frac{p-1}{a}.\ \ \ \ (0)\]

Also, if $g\in G$ is such that $G/M=\langle gM\rangle$, then $P\langle g\rangle$ is a subgroup $U$ of $G$ which has a factor group that is a 
Frobenius group of order $ap$. So if $0<V_0<V$ is an irreducible  $U$-submodule of $V$, then by Lemma \ref{fs} we have $|V|\geq |V_0|\geq q^a\geq 2^a$.

Now let $S$ be the solvable radical of $G$. Then $SV$ is a solvable group with trivial Frattini subgroup, and hence by \cite[Theorem B]{keller}
with $\beta$ as introduced above we have that
\[k(SV)\geq |SV|^{\beta}\geq |V|^{\beta}\geq 2^{a\beta}. \ \ \ \ (1)\]

Furthermore, $G/S$ is a group with trivial solvable radical, and so by \cite[Theorem 2.1]{baumeister} with   $\delta_0>0$
as introduced above we get 
\[\log |G/S| \leq \delta_0(\log k(G/S))^4.\]

Furthermore, note that $k(K)\leq a$, because if $k(K)>a$, then $$k(M)=k(P\times K)=pk(K)> pa$$ and then
$$k(GV)\geq k(G)\geq k(M)/|G/M|> pa/a=p,$$ contradicting $GV$ being a counterexample. Now since $P\leq S$,
we see that $$MS/S\cong M/(M\cap S)\cong K/(K\cap S)$$ and hence $k(MS/S)\leq k(K)\leq a$. Since $|G:M|=a$, we then conclude
that $$k(G/S)\leq k(G/MS)k(MS/S)\leq a\cdot a=a^2.$$ With this we get
\[ |G/S|\leq  2^{ \delta_0(\log a^2)^4}= 2^{ 16\delta_0(\log a)^4}. \ \ \ \ (2)\]

Putting (1) and (2) together, we obtain
\[p\geq k(GV) \geq \frac{k(SV)}{|G/S|}\geq \frac{2^{a\beta}}{2^{ 16\delta_0(\log a)^4}}\ \ \ \ \ (3).\]
Note 
that by (0) we may assume that $a\geq \log p$.
Therefore by $(*)$ we know that $\beta a -16\delta_0(\log a)^4\geq (\beta/2) a$.
Therefore by (3) we get
\[\log p\geq \beta a -16\delta_0(\log a)^4\geq (\beta/2) a.\]
Hence $a\leq (2/\beta)\log p$, and thus (0) yields 
\[k(GV)\geq \frac{p-1}{a}\geq \frac{p-1}{ (2/\beta)\log p}=\frac{\beta}{2} \frac{p-1}{\log p}.\]
But now by $(i)$  we see that
\[\frac{\beta}{2} \frac{p-1}{\log p}\geq A \frac{p}{\log p}\]
which is the final contradiction to $GV$ being a counterexample. The proof is complete.
\end{proof}

Now, we can complete the proof of the first part of Theorem A. We refer the reader to \cite[9A]{isafg} for the definition and basic properties of the generalized Fitting subgroup.

\begin{thm}
\label{psol}
There exists a constant $C$ such that if $G$ is a finite  group  such that $p$ divides $|G:\bF(G)|$ then $k(G)\geq Cp/\log p$.
\end{thm}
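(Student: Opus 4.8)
The plan is to reduce the general statement of Theorem~\ref{psol} to the critical case already handled in Theorem~\ref{p-solvable}, by peeling off the layers of the generalized Fitting subgroup $\bF^*(G)$. First I would invoke Theorem~\ref{p2}: since $k(G)\geq cp$ whenever $p^2\mid |G|$, and $cp\geq Cp/\log p$ for a suitable $C$, we may assume $|G|_p=p$. Next, arguing by induction on $|G|$, standard reduction via part~(i) of Lemma~2.1 (i.e.\ $k(G)\geq k(G/N)$) lets me assume that every proper quotient has its Sylow $p$-subgroup contained in the Fitting quotient in a way that forces $p$ to ``sit at the bottom''; more precisely I want to arrange that $G$ has a unique minimal normal subgroup $N$, and that either $N$ is a $p'$-group or $p\mid|N|$.

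The main case division is according to the structure of $\bF^*(G)$. If $p$ divides $|\bF^*(G):\bF(G)|$, then $p$ divides the order of the layer $E(G)$, so $p$ divides $|N|$ for some component-type minimal normal subgroup $N=S\times\cdots\times S$ with $S$ simple. When $S$ is a $p'$-group this is exactly the situation of Lemma~\ref{unique}, giving $k(G)>Cp$; when $p\mid|S|$, $S$ is non-$p$-solvable and Theorem~B applies to give $k(G)\geq Cp$. In the remaining case $p$ divides $|G:\bF^*(G)|$. Here $G/\bF^*(G)$ acts faithfully on $\bF^*(G)$, and I would first reduce to the situation where $\bF^*(G)=\bF(G)=\bO_q(G)$ for a single prime $q\ne p$: the components contribute via the two lemmas above as before, and the $\bO_r$ for $r\ne q$ are handled by coprime action and Lemma~2.1(iii). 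Once $\bF^*(G)=V:=\bO_q(G)$ is an elementary abelian $q$-group (passing to $V/\Phi(V)$) on which $G/C_G(V)$ acts faithfully with $p\mid|G/C_G(V)|$, I am essentially looking at a group $HV$ with $V$ a faithful $H$-module over $\GF(q)$ and $p\mid|H|$. Replacing $H$ by a subgroup whose Sylow $p$-subgroup is normal of order $p$ (e.g.\ the normalizer of a Sylow $p$-subgroup intersected appropriately, using that $k$ only goes down) puts me exactly in the hypotheses of Theorem~\ref{p-solvable}, which delivers $k(GV)\geq A\,p/\log p$, and hence $k(G)\geq A\,p/\log p$ after quotienting out the kernel.

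The hardest step, I expect, is the bookkeeping that gets from an arbitrary $G$ with $p\mid|G:\bF(G)|$ down to the clean ``$HV$ with $H$ having a normal Sylow $p$-subgroup of order $p$'' input of Theorem~\ref{p-solvable}: one must be careful that the prime $p$ genuinely acts on a $p'$-chief factor and is not absorbed into $\bF(G)$ along the way, and that $k$ is not lost when restricting to a subgroup containing a conjugate of the relevant $p$-element together with the module. The coprime-action arguments (using that $[P,V]\oplus C_V(P)$ and that $P$ acts fixed-point-freely on $[P,V]$) mirror the opening of the proof of Theorem~\ref{p-solvable} and should transfer, but combining them with the generalized Fitting subgroup analysis — ensuring the component case and the $\bO_q$ case are exhaustive and do not overlap badly — is where the real care is needed. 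Finally one takes $C=\min\{c,\,C_{\text{Thm B}},\,A\}$ (adjusted by the $\log$ factor on the $p^2$ and component cases) to obtain the uniform constant.

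\begin{proof}
[Proof of Theorem~\ref{psol}]
We postpone the detailed argument; it proceeds along the lines sketched above, reducing via Theorems~\ref{p2} and~\ref{p-solvable}, Lemma~\ref{unique}, and Theorem~B.
\end{proof}
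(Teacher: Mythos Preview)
Your outline follows the same broad strategy as the paper --- reduce to $|G|_p=p$, kill the Frattini subgroup, and funnel everything into either Lemma~\ref{unique} or Theorem~\ref{p-solvable} --- but it contains two genuine gaps.

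First, the case split is miswired. If $p\mid |\bF^*(G):\bF(G)|$ then $p\mid|\bE(G)|$, hence $p\mid|S|$ for some component $S$; so your subcase ``$S$ a $p'$-group'' is vacuous, and Lemma~\ref{unique} is never reached along that branch. In the paper the non-abelian case arises differently: one first reduces to $G$ $p$-solvable via Theorem~B (so $p\nmid|\bF^*(G)|$ automatically) and to $\Phi(G)=1$, then picks a single minimal normal subgroup $V$ with $p\mid|G:\bC_G(V)|$. If that $V$ is non-abelian one passes to $G/\bC_G(V)$, which has $V\bC_G(V)/\bC_G(V)\cong V$ as its unique minimal normal subgroup, and Lemma~\ref{unique} applies. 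There is no need to ``clear out'' all components or all $\bO_r(G)$ for $r\ne q$ beforehand; trying to do so via Lemma~2.1(iii) and ``the two lemmas above'' does not work, because neither lemma says anything about a non-abelian $p'$-minimal normal subgroup $N$ unless you already know $p\mid|G:\bC_G(N)|$.

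Second, and more seriously, the step ``replacing $H$ by a subgroup whose Sylow $p$-subgroup is normal \dots\ using that $k$ only goes down'' is false: $k$ is monotone under quotients, not under passage to subgroups, so $k(HV)$ is not bounded below by $k(H_0V)$ for $H_0\le H$. The paper obtains a normal Sylow $p$-subgroup in $H$ by induction on $|G|$ instead: once $V=\bF(G)$ is the unique minimal normal subgroup and $H\cong G/V$ acts faithfully on $V$, either $p\mid|H:\bF(H)|$, in which case the inductive hypothesis applied to the proper quotient $H$ already gives $k(G)\ge k(H)\ge Cp/\log p$, or else the Sylow $p$-subgroup of $H$ lies in $\bF(H)$ and is therefore normal, and Theorem~\ref{p-solvable} applies directly. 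This is the step you need to repair.
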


\begin{proof}
By Theorem B, we may assume that $G$ is $p$-solvable. 
By Theorem \ref{p2}, we may assume that $|G|_p=p$. We argue by induction on $|G|$. By III.4.2(d) of \cite{hup}, we know that $\bF(G/\Phi(G))=\bF(G)/\Phi(G)$. Hence, $p$ divides $|G/\Phi(G):\bF(G/\Phi(G))|$. Since $k(G)\geq k(G/\Phi(G))$, we may assume that $\Phi(G)=1$. Note that this implies that the Frattini subgroup of any subnormal subgroup of $G$ is trivial and in particular any component of $G$ is simple. Hence $\bE(G)$ is a direct product of (non-abelian) simple $p'$-groups. Furthermore, $\bF(G)$ is a direct product of elementary abelian minimal normal subgroups of $G$, by III.4.5 of \cite{hup}.

Hence the generalized Fitting subgroup $\bF^*(G)=\bE(G)\bF(G)$ is the direct product of the minimal normal subgroups of $G$, i.e., coincides with the socle of $G$. Note also that $p$ does not divide $|\bF^*(G)|$. Write $\bF^*(G)=V_1\times\cdots\times V_t$, where $V_1,\dots, V_t$ are minimal normal subgroups of $G$.

 Since $\bC_G(\bF^*(G))\leq\bF^*(G)$, we deduce that $\bC_G(\bF^*(G))=\bF(G)$. Write $C_i=\bC_G(V_i)$ for every $i$. Note that $\bC_G(\bF^*(G))=\bigcap_{i=1}^t C_i$ so $G/\bF(G)$ is isomorphic to a subgroup of the direct product  $G/C_1\times\cdots\times G/C_t$. Since $p$ divides $|G/\bF(G)|$, there exists $j$ such that $p$ divides $|G/C_j|$. Write $V=V_j$ and $C=C_j$. Note that since $p$ divides $|G/C|$, $p$ does not divide $|C|$.

 Suppose first that $V$ is abelian.
Using III.4.4 of \cite{hup}, we deduce that there exists $H\leq G$ such that $G=HV$ and $H\cap V=1$. Let $D=\bC_H(V)$, which is a normal $p'$-subgroup of $G$. Hence $VD/D$ is the unique minimal normal subgroup of $G/D$ and $H/D$ acts faithfully on $VD/D$. Since $k(G)\geq k(G/D)$ and the hypotheses hold for $G/D$, we may assume that $D=1$. Hence, $V$ is the unique minimal normal subgroup of $G$ and $\bF(G)=V$. By induction, again, we may assume that $p$ divides $|\bF_2(G):\bF(G)|$, so $H$ has a normal Sylow $p$-subgroup. Now, the result follows from Theorem \ref{p-solvable}.

Hence, we may assume that $V$ is not abelian. In this case, $VC/C$ is the unique minimal normal subgroup of $G/C$ and we are in the situation of Lemma \ref{unique}. The theorem follows.
\end{proof}

\section{Solvable groups: imprimitive case}

In this and the next section, we work to provide the explicit and reasonable constant in the solvable case of Theorem A.
The problem is reduced to studying the action of a group on a finite faithful irreducible module. Our first result handles the imprimitive case.

 \begin{thm}\label{prop} There exists a universal constant $D$ such that the following holds.
Let $p$ be a prime and $H$ a solvable group such that the Sylow $p$-subgroup of $H$ 
is normal in $H$ and cyclic of order $p$. Let $V$ be a faithful irreducible $H$-module over $GF(q)$ where
$q$ is a prime different from $p$, and suppose that $V$ is not quasi-primitive (in particular,
it is imprimitive).  Then
\[k(HV) \geq   D\ \frac{p}{\log p}.\]
One can choose $D=0.3492$. Moreover, if $p>11000$, then one can even choose $D=1/2$.
\end{thm}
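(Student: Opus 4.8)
The plan is to exploit the imprimitivity of $V$ to split into two regimes depending on how the block structure interacts with the normal cyclic Sylow $p$-subgroup $P$ of $H$. Since $V$ is not quasi-primitive, there is a proper subgroup $W < V$ and a system of imprimitivity $V = W_1 \oplus \cdots \oplus W_m$ with $m \geq 2$, where $H$ permutes the $W_i$ transitively; let $K$ be the kernel of this permutation action, so $H/K$ embeds in $\mathsf{S}_m$. The first dichotomy I would use is whether $p \mid |H/K|$ or $p \mid |K|$ (note $p \mid |H|$ and $P$ is cyclic of order $p$, so exactly one of these holds once we pass to a minimal system). In the first case, transitivity of $H/K$ on $m$ points with $p \mid |H/K|$ forces $m \geq p$, and then one can produce many orbits of $H$ on $V$ directly: taking a fixed nonzero $w \in W_1$ and looking at vectors supported on unions of blocks in an orbit of $P$, one gets on the order of $p/\log p$ (in fact essentially $p$) distinct $H$-orbits, because the permutation module of a $p$-cycle on $p$ points already has roughly $2^p/p$ orbits on nonzero vectors over $\mathbb{F}_q$; this easily beats $D p/\log p$ with $D = 1/2$.

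In the second, main case $p \mid |K|$, the point is that $P \le K$ acts on each block $W_i$, and since $P$ is normal in $H$ and $H$ is transitive on the blocks, $P$ acts nontrivially (indeed Frobenius-ly, after the usual reduction as in the proof of Theorem 4.1) on at least one block, hence — by conjugacy of the blocks — on every block in a $P$-orbit. Restricting to the stabilizer $H_1$ of $W_1$ and the module $W_1$, one is in a smaller instance: $W_1$ is an $H_1$-module on which the image of $P$ is again cyclic of order $p$ (or trivial, but then $P$ moves blocks, contradiction), and one can try to invoke induction / Theorem 4.1 applied to $H_1$ acting on $W_1$, combined with Clifford theory to lift orbit counts from $W_1$ up to $V$. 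The key inequality I would aim for is of the shape $k(HV) \ge n(H,V) \ge \frac{1}{|H/H_1|}\bigl(\text{something like } n(H_1, W_1)\bigr)$ balanced against the fact that $\dim V = m \dim W_1$ is large, so $|V|$ is exponentially bigger and there is a lot of room; Lemma 2.9 (the Fong–Swan consequence) guarantees $|W_1| \ge q^{d}$ with $d$ the smallest faithful complex degree of the relevant quotient, which feeds the exponential lower bounds from \cite{keller} and \cite{baumeister} exactly as in Theorem 4.1.

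Concretely the steps would be: (1) standard reductions — replace $V$ by an irreducible constituent, arrange $P$ Frobenius on $V$, pick a minimal block system; (2) dispose of the case $p \mid |H/K|$ by the explicit orbit-counting above, getting $D = 1/2$ comfortably; (3) in the case $p \mid |K|$, set up Clifford theory relative to the block system, identify the action of $P$ on a block, and reduce to a primitive-or-smaller situation on $W_1$; (4) combine the resulting bound on $k(H_1 W_1)$ (via Theorem 4.1 or Seager's Theorem 2.6, or induction on $|HV|$) with the block multiplicity $m \ge 2$ to recover $k(HV) \ge D p/\log p$; (5) finally do the numerology to pin down $D = 0.3492$ in general and $D = 1/2$ for $p > 11000$, using Lemma 2.9 and the calculus Lemma 2.10 to handle the finitely many small primes where the asymptotic estimates are not yet strong enough. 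I expect the main obstacle to be step (3)–(4): controlling the orbit count on $V$ from below in terms of data on a single block when $P$ genuinely spreads across several blocks, since naive averaging over $H/H_1$ loses a factor of $m$ that must be recovered from the exponential growth of $|W_1|^{\,m}$ — getting the constants to line up (rather than just an unspecified $D$) is the delicate part, and is presumably where the hypotheses "$p > 11000$" and the specific value $0.3492$ come from, in tandem with Lemma 2.10.
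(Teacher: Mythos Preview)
Your dichotomy collapses at the outset. Since $V$ is faithful and irreducible and $P\trianglelefteq H$ is cyclic of order $p$, the fixed space $C_V(P)$ is an $H$-submodule and is therefore zero; that is, $P$ acts frobeniusly on $V$. If $P\not\le K$, take a $P$-orbit $\{W_1,\dots,W_p\}$ of blocks and $0\ne w\in W_1$: the sum $\sum_{i=0}^{p-1} w^{x^i}$ is a nonzero $P$-fixed vector, contradiction. So $P\le K$ always, and your ``case $p\mid |H/K|$'' never occurs. Your orbit estimate in that case is in any event not justified: you cannot read off $H$-orbits from support patterns alone when $H$ is much larger than a $p$-cycle.

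The real gap is in your case $p\mid |K|$. You propose to bound $k(HV)$ via $k(H_1W_1)$ for the block stabilizer $H_1$, but there is no usable inequality of that shape: the only general subgroup bound goes the wrong way (it divides by the index $|HV:H_1W_1|=m|W_1|^{m-1}$, which is enormous), and invoking Theorem~4.1 on a block would only return a non-explicit constant, not $0.3492$ or $1/2$. The paper does something quite different. With $N\trianglelefteq H$ maximal such that $V_N$ is inhomogeneous (so $H/N$ permutes the $n$ homogeneous components primitively and $P\le N$), it proves an explicit orbit-counting lemma: if $M=\bC_N(P)$ and $T_j=\sum_{i\le j} v_i^{\,N}$ for suitable $v_i\in V_i$, then $M$ has at least $p^{j-1}$ orbits on $T_j$. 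The inductive step uses that for any $y\in T_{j-1}$ one has $\bC_M(y)\le K$ (the $p'$-Hall of $M$), so each $M$-orbit in $V_j$ of size divisible by $p$ breaks into at least $p$ distinct $\bC_M(y)$-orbits. This yields $n(N,V)\ge p^{n-1}/(p-1)$, and together with Dixon's bound $|H/N|\le 24^{(n-1)/3}$ one gets $k(HV)\ge (p/2\sqrt[3]{3})^{n-1}/(p-1)$. This already finishes $n\ge 3$; the entire difficulty is $n=2$, where the paper shows $M$ acts irreducibly and primitively on each $V_i$, applies Seager's theorem to $N/\bC_N(V_1)$ on $V_1$, and handles the resulting trichotomy case by case. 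The constant $0.3492$ arises precisely in Seager's case~(2) with $n=2$ (balancing $2\sqrt{p-1}$ against $q^{m/2}/(12m)$), not from small-prime clean-up via Lemma~2.6; that lemma is used only once at the start to assume $p$ is large.
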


\begin{proof}
Let $0<D\leq 1/2$ be arbitrary. We will see that the proof will work with this $D$ except for one critical case.
In that one case the argument will only work under the additional assumption $p>11000$, and for arbitrary $p$ it will only
work if $D\leq 0.3492$.\\

First note that by Lemma \ref{cal} and \cite{hk1}, we may assume that $2\sqrt{p-1}\leq Dp/\log_2(p)$, that is, $D \geq (2\sqrt{p-1}\log_2(p))/p$.
If $D=1/2$, this forces $p>1800$, and if $D=0.34924$,  this forces $p>4936$.\\
Let $HV$ be a counterexample of minimal order. 


Now let $N\unlhd H$ be maximal such that $V_N$ is not homogeneous, and then by Clifford's theorem we
know that $V_N=\oplus_{i=1}^n V_i$ for some $n>1$ and homogeneous components $V_i$, and $H/N$ permutes the $V_i$ ($i=1,\dots ,n$)
faithfully and primitively.

Next, let $P$ be the Sylow $p$-subgroup of $H$ and observe that since $P$ is normal and cyclic, it acts frobeniusly on $V$. We claim that $P\leq N$. If not, then $P$ would permute the
$V_i$ nontrivially, and we could without loss assume that $\{V_1, \dots , V_p\}$ would be an orbit of $P$ in this action. But then if $x$ is a
generator of $P$ and $0\ne v_1\in V_1$, then $\sum\limits_{i=0}^{p-1}v_1^{x^i}$ would clearly be a nonzero element of $V$ which is
fixed by $x$ and thus $P$, contradicting $P$ acting frobeniusly on $V$. This proves our claim that $P\leq N$.

We study the action of $N$ on $V$. We claim that $N$ has at least $p^{n-1}/(p-1)$ distinct orbits on $V$   (*). \\

To see this, we let $0\ne v_1\in V_1$.
Let $g_i\in H$ ($i=1, \dots , n$) such that $V_1^{g_i}=V_i$ for all $i$; clearly we may assume that $g_1=1$. Put $v_i=v_1^{g_i}\in V_i$ for
all $i$. Then
\[T:=\sum\limits_{i=1}^n v_i^N\]
obviously is an $N$-invariant subset of $V$. For $j=1,\dots n$ we define
\[T_j:=\sum\limits_{i=1}^j v_i^N\]
so that $T_n=T$. Write $M=\bC_N(P)$ and note that $M=K\times P$ for a Hall $p'$-subgroup $K$ of $M$.
To establish our claim (*), we prove the following. 

(**) Let $j\in\{1,\dots , n\}$. Then $M$ has at least $p^{j-1}$ orbits on $T_j$.

We prove this by induction on $j$. If $j=1$, then the assertion is trivial. So let $1<j\leq n$  and suppose that $M$ has at least
$p^{j-2}$ orbits on $T_{j-1}$. Now let $y\in T_{j-1}$ be in one of these orbits. Since $P$ is normal in $H$ and acts frobeniusly on $V$,
we conclude that $p$ does not divide the order of $\bC_M(y)$; that is, $\bC_M(y)=\bC_K(y)$. Since $M$ is $p$-nilpotent and $P$ acts frobeniusly 
on $V$, the $N$-orbit $v_j^N\subseteq V_j$ decomposes into at least $p$ distinct $\bC_M(y)$-orbits (since the sizes of the $\bC_M(y)$-orbits
are not divisible by $p$, but $|v_j^M|$ is divisible by $p$, and $P$ will always join $p$ of the $\bC_M(y)$-orbits to a single $\bC_M(y)\times P$-orbit).
If $w_i\in v_j^N$ ($i=1, \dots , p$) are representatives of such $p$ distinct $\bC_M(y)$-orbits, then we see that the
$y+w_i\in T_j$ ($i=1, \dots , p$) are representatives of $p$ distinct $M$-orbits. Since $y$ was chosen from an arbitrary $M$-orbit on
$T_{j-1}$, and since furthermore clearly any $y_1+w_i$ and $y_2+w_j$ (for any $i,j\in\{1,\dots ,p\}$ lie in different $M$-orbits if
$y_1$ and  $y_2$ lie in different $M$-orbits on $T_{j-1}$, it follows that $M$ has at least $p^{j-2}\cdot p=p^{j-1}$ distinct orbits on $T_j$.
Thus (**) is proved.

Now $|N/M|=|\bN_N(P)/\bC_N(P)|$ divides $p-1$, and so by (**) for $j=n$ it follows that $N$ has at least $p^{n-1}/(p-1)$ orbits on $V$,
establishing (*).

Now by a result of Dixon \cite{dixon} we know that $|H/N|\leq 24^{(n-1)/3}$, and since $H/N$ permutes the $N$-orbits on $V$, and the $H$-orbits
on $V$ are clearly conjugacy classes of $HV$, with (*) we obtain
\[k(HV)\geq \frac{n(N,V)}{|H/N|}\geq \frac{    \frac{p^{n-1}}{p-1}   }{24^{(n-1)/3}}=\frac{1}{p-1}\
\left(\frac{p}{2\sqrt[3]{3}}\right)^{n-1}.\ \ \ \ \  (+)\]
This implies that if
\[ p^{n-3}\ \geq\ 2^{n-1}\ \cdot 3^{\frac{n-1}{3}},\ \ \ \ (***)\]
then $k(HV)\geq p$, contradiction, and we are done.

Hence as $p>1000$, $p$ will satisfy (***) unless
$n\leq 3$. Since we are done if (***) holds, we may assume that $n\leq 3$, i.e., $n=2$ or $n=3$.

First suppose that $n=3$. Then by $(+)$ we get that $k(HV)\geq \frac{p}{4\sqrt[3]{9}}\geq \frac{Dp}{\log_2(p)}$.
So we are done when $n=3$.

So let finally $n=2$. Then $|H/N|=2$. Recall that $M=K\times P$ for a Hall $p'$-subgroup $K$ of $M$. 

We first want to show that  $M$ acts irreducibly on $V_1$. To do so, assume that $V_1=X_1\oplus X_2$ for
nontrivial $M$-modules $X_1$ and $X_2$. Then we also have $V_2=Y_1\oplus Y_2$ for
nontrivial $M$-modules $Y_1$ and $Y_2$. Arguing just as in the proof of (**), we let $0\ne x_1\in X_1$,
so that each $M$-orbit on $X_2$ splits in at least $p$ nontrivial $\bC_M(x_1)$-orbits. If $x_2$ is in one of these orbits,
then  each $M$-orbit on $Y_1$ splits in at least $p$ nontrivial $\bC_M(x_1+x_2)$-orbits.  If $y_1$ is in one of these orbits,
then  each $M$-orbit on $Y_2$ splits in at least $p$ nontrivial $\bC_M(x_1+x_2+y_1)$-orbits. This shows that
$M$ has at least $p^3$ nontrivial orbits on $V$. Thus
\begin{eqnarray*}
k(HV)\ &\geq\ & \frac{k(MV)}{|HV:MV|}\ \geq \  \frac{n(M,V)}{|H:M|}\ \\
&\geq\  &
\frac{p^3}{2(p-1)}\ \geq\  p,\\
\end{eqnarray*}
contradicting $HV$ being a counterexample. So indeed $M$ acts irreducibly on $V_1$ (and also on $V_2$).

Next we claim that $M$ acts primitively on $V_1$. 
To see this, assume to the contrary that $V_1=W_1\oplus \dots\oplus W_l$
is the sum of $l\geq 2$ subspaces $W_i$ ($i=1,\dots l$) which are transitively permuted by $M$. Let $L\unlhd M$
be the kernel of this permutation action.
Then likewise
$V_2=W_{l+1}\oplus \dots\oplus W_{2l}$
is the sum of $l$ subspaces $W_i$ ($i=l+1,\dots ,2l$) which are transitively permuted by $M/L$.
Now if $P$ is not a subgroup of $L$, then $P$ permutes the $W_i$ with orbits of length $p$. Hence $l\geq p$ and
we may assume that $\{W_1,\dots , W_p\}$ is a $P$-orbit. But then taking $0\ne w_1\in W_1$ and adding up the
elements in $w_1^P$ we easily find a nonzero fixed point of $P$ on $W_1\oplus\dots\oplus W_p$, contradicting $P$
acting frobeniusly on $V$. Thus indeed $P\leq L$.\\
We can now argue as for $M$ above to see that $L$ has at least $p^{2l-1}$ nontrivial orbits on $V$. Using Dixon's result
again, we know that $|M/L|\leq 24^{(2l-1)/3}\leq 3^{2l-1}$ so that we get
\begin{eqnarray*}
k(HV)\ &\geq\ & \frac{k(LV)}{|HV:LV|}\ \geq \  \frac{n(L,V)}{|H:L|}\ \\
&\geq\  &
\frac{p^{2l-1}}{|H:N| |N:M| |M:L|}\ \geq\  \frac{p^{2l-1}}{2(p-1) 3^{2l-1}},\\
\end{eqnarray*}
where the first inequality follows from the well-known fact that  the number of conjugacy classes in a subgroup is bounded
by its index times the number of conjugacy classes in the whole group \cite{gal}. Now since $p>1000$ and $l\geq 2$, we further get
\begin{eqnarray*}
k(HV)\ &\geq\ &  \frac{p^{3}}{2(p-1) 3^{3}}\geq\   \frac{p^{2}}{54}   \geq p ,
\end{eqnarray*}
contradicting $HV$ being a counterexample. This shows that $V_1$ (and thus also $V_2$) is primitive as an $M$-module.
Then clearly $V$ is also primitive as an $N$-module.

We keep writing $|V_1|=q^m$ and apply Seager's result (Theorem \ref{sea}) to the action of $N/\bC_N(V_1)$ on $V_1$.
This yields that we have one of the following situations:

\noindent
(1) $N/\bC_N(V_1)$ is permutation isomorphic to a subgroup of $\Gamma(q^m)$;\\
(2) If $r$ is the number of orbits of $N$ on $V_1$, then $r>(q^{m/2}/12m) +1$; or\\
(3) $q^m\in\{17^4, 19^4, 7^6, 5^8, 7^8, 13^8, 7^9, 3^{16}, 5^{16}\}$.

Let $W\leq V$ be an irreducible $P$-module. Since $V_P$ is homogenous,
 then there is a $t\in\mathbb{N}$ such that $ V_P$ is a direct sum of $t$ copies of $W$.
Now we have $q^m=|V|=|W|^t$, so if we write $|W|=q^s$, then we see that $m=st$. 

Note that if $s=1$, then by \cite[Theorem 2.1]{manz-wolf} we have that (1) holds; so if (1) does not hold, then
we know that $s\geq 2$. 

We first assume that (2) holds, but (1) does not hold. As just observed, then $s\geq 2$.
Moreover,  from (2) we get
\begin{eqnarray*}
r\ &>\ &  \frac{q^{m/2}}{12m}+1\ =\   \frac{|W|^{t/2}}{12st}+1.
\end{eqnarray*}

Now if $v_1\in V_1$, then $\bC_N(v_1)\leq N$ also has at least $r$ distinct orbits on $V_2$, and this shows that each orbit of $N$
on $V_1$ gives rise to at least $r$ orbits of $N$ on $V$ which all have a representative that has $v_1$ as their $V_1$-component.
Then we have $n(N,V)\geq r^2$, and with the above and the
fact that $s=\log_q|W|$ we see that
\[n(N,V)\ \geq\ r^2\ \geq\  \frac{|W|^{t}}{144t^2(\log_q|W|)^2}.   \]
Now since $|W|>|P|=p>1000$, we see that the function $f(x)= |W|^x/x^2$ is increasing for $x\geq 2$, and hence
$|W|^t/t^2\geq |W|^2/2^2=|W|2t/4$, and thus
\begin{eqnarray*}
n(N,V)\ &\geq\ &\frac{|W|^{2}}{576(\log_q|W|)^2} \ \geq\ \frac{|W|^{2}}{576(\log_2|W|)^2}\ \\
\ &=\ & \frac{D|W|}{(\log_2|W|)}\cdot \frac{|W|}{576D\log_2|W|}\ \geq\ \frac{Dp}{\log_2p}\cdot \frac{p}{576D\log_2p}.
\end{eqnarray*}
Since clearly $k(HV)\geq n(N,V)/|H/N|=n(N,V)/2$, we see that we are done if $\frac{p}{576D(\log_2p)}\geq 2$. This happens if and
only if \[\frac{p}{\log_2p}\geq 1152 D.\] 
Now first note that if $p>11000$ and $D=1/2$, then this is the case (we actually could choose $D$
slightly larger here) and we are done. \\
In general, however, with our lower bound on $D$ from the beginning of the proof we obtain
\[(2\sqrt{p-1}\log(p))/p \leq D \leq p/(1152\log(p)) \]
This is the critical case mentioned at the beginning of the proof and is thus the one which determines $D$.
The optimal $D$ is reached when we have equality here, so we have to find $p$ such that
the left hand side is equal to the right hand side. With the help of a calculator one easily finds that 
 this happens exactly when $p = 4936.0274$,
and then the value of $D$ equals 0.3492276. Therefore if $D=0.3942$, then we are done in this situation as well.\\


Next we assume that (3) holds, but (1) does not hold. As observed above, since (1) does not hold, we have $s\geq 2$.
Hence $|W|\leq q^{m/2}$ and actually $|W|= q^{m/s}$. Since $p=|P|$ divides $|W|-1$ and since (3) holds
(and since $a-1$ divides $a^b-1$ for any positive integers $a$, $b$), we see that $p$ is
a prime divisor of one of the following numbers: $17^2-1, 19^2-1, 7^2-1, 7^3-1, 5^4-1, 5^2-1, 7^4-1, 7^2-1, 11^4-1, 11^2-1,
13^4-1, 7^3-1,  3^8-1, 3^4-1, 3^2-1, 5^8-1, 5^4-1, 5^2-1$. But recall from the very beginning that $p>1000$, and none of the numbers
listed has a prime divisor this large. This is a contradiction.\\

It remains to consider the case that (1) holds. Then $N$ is isomorphic to a subgroup of $\Gamma(q^m)\times\Gamma(q^m)$.
Recall that $\Gamma(q^m)$ is of order $(q^m-1)m$ and has a normal cyclic subgroup $\Gamma_0(q^m)$. Write
$N_0=N\cap (\Gamma_0(q^m)\times\Gamma_0(q^m))$, so that $N_0/\bC_{N_0}(V_i)$ acts frobeniusly on $V_i$ ($i=1,2$).
Furthermore, we have $|N/N_0|\leq m^2$.\\

Let $s$ be as above, that is, $V_1$ viewed as a $P$-module, is the direct sum of $s$ copies of an irreducible $P$-module $W$.
First suppose that $s\geq 2$. Writing again $n(N,V)$ for the number of orbits of $N$ on $V$, we have
\[k(NV)\geq n(N,V)\geq \frac{|V|}{|N|}\geq \frac{q^{2m}}{|N_0|m^2}=\frac{1}{|N_0|}\left(\frac{q^m}{m}\right)^2.\]
Since $k(HV)\geq k(NV)/2$, then we are done if $\frac{1}{|N_0|}\left(\frac{q^m}{m}\right)^2\geq\frac{2Dp}{\log_2p}$.
Therefore we  now assume that $\frac{1}{|N_0|}\left(\frac{q^m}{m}\right)^2 <\frac{2Dp}{\log_2p}$ or, equivalently,
$|N_0|>(1/2D)((\log_2 p)/p)(q^m/m)^2$.\\
Now since $p$ divides $|W|-1$ and $s\geq 2$, it follows that $p\leq |V_1|^{1/2}=q^{m/2}$ and $p^2\leq q^m$. Therefore
 we argue as follows.

\begin{eqnarray*}
k(HV) &\geq & \frac{k(NV)}{2}\ \geq   \frac{k(N)}{2}\geq   \frac{\frac{k(N_0)}{|N/N_0|}}{2}
 = \frac{|N_0|}{2|N/N_0|}\geq \frac{|N_0|}{2m^2}\\
&\geq &\frac{(1/2D)((\log_2 p)/p)(q^m/m)^2}{2m^2}
 =  \frac{1}{4D}\frac{\log_2 p}{p}\left(\frac{q^m}{m^2}\right)^2
\\
& \geq & \frac{1}{4D}\frac{\log_2 p}{p}\left(\frac{q^m}{(\log_2 q^m)^2}\right)^2\geq
\frac{1}{4D}\frac{\log_2 p}{p}\left(\frac{p^2}{(\log_2 p^2)^2}\right)^2\\
&\geq & \frac{1}{64D}\frac{\log_2 p}{p}\left(\frac{p}{\log_2 p}\right)^4=\frac{1}{64D}\left(\frac{p}{\log_2 p}\right)^3
= \frac{Dp}{\log_2 p}\ \cdot\ \frac{1}{64D^2}\left(\frac{p}{\log_2 p}\right)^2.
\end{eqnarray*}
Now since $p>1000$, it is easy to see that the last factor $\frac{1}{64D^2}\left(\frac{p}{\log_2 p}\right)^2$ is greater than 1,
and so we have the desired contradiction to $HV$ being a counterexample and are done.

So let finally $s=1$. We argue somewhat similarly to the case that $s\geq 2$, but need some more detailed structural information
first. Since $s=1$, we know that $V_1$ is irreducible as a $P$-module. By \cite[Example 2.7]{manz-wolf} we know that $p$ is
a Zsigmondy prime divisor of $q^m-1$. Since $P$ acts faithfully (actually, frobeniusly) on $V_1$ and $V_2$, with
\cite[Lemma 6.5(c)]{manz-wolf} we conclude that $\bC_N(P)=N_0$. In particular, $N/N_0$ is isomorphic to a subgroup
of $\Aut(P)$, and thus $N/N_0$ is cyclic.
On the other hand, since $N$ is isomorphic to a subgroup of $\Gamma(q^m)\times\Gamma(q^m)$, we see that $N/N_0$ is
isomorphic to a subgroup of a homocyclic group of type $(m,m)$, so in particular the exponent of $N/N_0$ is bounded above by
$m$. Altogether it follows that $|N/N_0|\leq m$.

Now
\[k(NV)\geq n(N,V)\geq \frac{|V|}{|N|}\geq \frac{q^{2m}}{|N_0|m}.\]
Since $k(HV)\geq k(NV)/2$, then we are done if $\frac{1}{|N_0|}\frac{q^{2m}}{m}\geq\frac{2Dp}{\log_2p}$.
Therefore we  now assume that $\frac{1}{|N_0|}\frac{q^{2m}}{m} <\frac{2Dp}{\log_2p}$ or, equivalently,
$|N_0|>(1/2D)((\log_2 p)/p)(q^{2m}/m)$.\\
As clearly $p<|V_1|=q^m$, we can argue as follows.

\begin{eqnarray*}
k(HV) &\geq & \frac{k(NV)}{2}\ \geq   \frac{k(N)}{2}\geq   \frac{\frac{k(N_0)}{|N/N_0|}}{2}
 = \frac{|N_0|}{2|N/N_0|}\geq \frac{|N_0|}{2m}\\
&\geq &\frac{(1/2D)((\log_2 p)/p)(q^{2m}/m)}{2m}
 \geq  \frac{1}{4D}\frac{\log_2 p}{p}\left(\frac{q^m}{m}\right)^2
 \geq  \frac{1}{4D}\frac{\log_2 p}{p}\left(\frac{q^m}{\log_2 q^m}\right)^2\\
&\geq &\frac{1}{4D}\frac{\log_2 p}{p}\left(\frac{p}{\log_2 p}\right)^2 = \frac{1}{4D}\frac{p}{\log_2 p}
=\frac{Dp}{\log_2 p}\cdot \frac{1}{4D^2}.
\end{eqnarray*}
Hence we are done if $\frac{1}{4D^2}\geq 1$ or, equivalently, $D\leq 1/2$. Fortunately, this is the case, and the
proof is complete.
\end{proof}



\section{Solvable groups: primitive case}

Next, we consider  the primitive case.

 \begin{thm}\label{prop2} 
Let $p$ be a prime and $H$ a solvable group such that the Sylow $p$-subgroup of $H$ 
is normal in $H$ and cyclic of order $p$. Let $V$ be a faithful irreducible $H$-module over $GF(q)$ where
$q$ is a prime different from $p$, and suppose that $V$ is primitive. Then
\[k(HV) \geq   \ \frac{p}{2\log p}.\]
\end{thm}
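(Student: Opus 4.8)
The plan is to argue by minimal counterexample, closely following the pattern of the proof of Theorem~\ref{prop}. First I would reduce to large $p$: since $H$ is solvable with $p\mid|H|$, the H\'ethelyi--K\"ulshammer bound gives $k(HV)\ge k(H)\ge 2\sqrt{p-1}$, which by Lemma~\ref{cal} already exceeds $p/(2\log p)$ when $p<1800$, so I may assume $p\ge 1800$. Next I record the structure of the action. As $V$ is irreducible and $C_V(P)$ is a submodule, $P$ acts frobeniusly on $V$; and as $V$ is primitive it is quasi-primitive, so $V_P$ is homogeneous, say $V_P\cong tW$ for an irreducible $P$-module $W$ of $\GF(q)$-dimension $s=\operatorname{ord}_p(q)$, with $|V|=q^m$ and $m=st$. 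Then $p$ is a Zsigmondy prime divisor of $q^s-1$, so $s\mid p-1$, $s<p$, and $p<q^s\le q^m$. The structural point I would rely on is that $\mathbf{C}_{\GL_m(q)}(P)=\GL_t(q^s)$ and $\mathbf{N}_{\GL_m(q)}(P)=\Gamma L_t(q^s)$; hence $H\le\Gamma L_t(q^s)$, the group $M:=\mathbf{C}_H(P)=H\cap\GL_t(q^s)$ is a solvable primitive subgroup of $\GL_t(q^s)$ containing the central scalar subgroup $P$ of order $p$, and $a:=|H:M|$ divides $s$. If $a=1$ then $P$ is central in $H$, so $H=P\times K$ by Schur--Zassenhaus, $KV\trianglelefteq HV$ with $HV/KV\cong C_p$, and $k(HV)\ge p$; thus I may assume $a\ge 2$.

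The main dichotomy is on the size of $a$. If $a\le 2\log p$, then by the argument giving inequality~(0) in the proof of Theorem~\ref{p-solvable} (which applies here verbatim) I obtain
\[
k(HV)\ \ge\ k(H)\ \ge\ a+\frac{p-1}{a}\ \ge\ 1+\frac{p-1}{2\log p}\ \ge\ \frac{p}{2\log p},
\]
the desired contradiction. So suppose $a>2\log p$. Since $a\mid s$ we then have $s\ge a>2\log p$, whence $q^s\ge 2^s>p^{2}$ and in particular $|V|=q^{st}>p^{2}$, so the module is forced to be large. In this regime the plan is to bound $k(MV)$ below by $\sqrt{|V|}$ up to a factor small compared to $p$, and then use $k(HV)\ge k(MV)/a$ together with $a\le s\le p-1$ and $\sqrt{|V|}>p$ to conclude. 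Concretely, by applying Seager's Theorem~\ref{sea} to the primitive group $M$ (passing to the prime field if $s=1$), or by invoking the structure theorem for primitive solvable linear groups, one exhibits a cyclic normal subgroup $M_0\le M$ with $P\le M_0$ and $|M:M_0|$ bounded by a small function of $s$, so that
\[
k(MV)\ \ge\ \frac{1}{|M:M_0|}\,\max\!\Bigl(|M_0|,\ \frac{|V|}{|M_0|}\Bigr)\ \ge\ \frac{\sqrt{|V|}}{|M:M_0|}\ >\ \frac{p}{|M:M_0|},
\]
which, divided by $a\le p-1$, still beats $p/(2\log p)$.

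The step I expect to be the main obstacle is exactly this last one: controlling $k(MV)$ once $a$, hence $s$, hence $|V|$, is large. A primitive solvable linear group can have order far larger than the size of its module (the ``symplectic part'' of its Fitting subgroup), so the naive estimate $k(MV)\ge|V|/|M|$ may fail, and one must instead locate a large abelian section of $M$ or produce many $M$-orbits directly. Carrying this out, along with the borderline configurations --- notably $t=1$, where $W=V$ is of Singer type and one balances $a+(p-1)/a$ against the quantity $q^{m/2}/|H:\mathbf{C}_H(P)|$ coming from $k(HV)\ge\max\{k(H),|V|/|H|\}$, the small values of $t$, and the exceptional values of $q^m$ in Seager's theorem (all too small to have a prime divisor of $q^s-1$ exceeding $1800$, hence vacuous) --- requires a short but careful case analysis in which the elementary inequalities of Lemma~\ref{cal} and the monotonicity of $x/\log x$ absorb the constants. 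It is the tightness of these estimates that accounts for the factor $2$ in $p/(2\log p)$.
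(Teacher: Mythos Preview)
Your setup and the case $a\le 2\log p$ match the paper's proof (the paper writes $n$ for your $a=|H:\mathbf{C}_H(P)|$ and uses Lemma~\ref{fs} rather than your $\Gamma L_t(q^s)$ description to get $|V|\ge q^n$, but the effect is the same). The proposal is, however, genuinely incomplete for $a>2\log p$, as you yourself flag, and the route you sketch there would not close. You propose to apply Seager's theorem to $M=\mathbf{C}_H(P)$ inside $\GL_t(q^s)$ and then divide by $a$; but Theorem~\ref{sea} is stated over a prime field, so it does not apply to $M\le\GL_t(q^s)$ when $s>1$, and over $\GF(q)$ there is no reason for $V_M$ to be primitive (primitivity is a hypothesis on $H$, and only homogeneity passes to the normal subgroup $M$). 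Your fallback of locating a cyclic $M_0\trianglelefteq M$ with small $|M:M_0|$ runs into exactly the obstacle you name: the symplectic-type part of a primitive solvable linear group need not be small. And dividing an orbit count of order $\sqrt{|V|}$ by $a$, which can be as large as $s\le p-1$, is far too lossy to recover $p/(2\log p)$.

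The paper's argument is different and more direct: it applies Seager to $H$ itself acting on $V$, which is legitimate precisely because $V$ is primitive as an $H$-module over the prime field $\GF(q)$. This yields $n(H,V)\ge q^{n/2}/(12n)$ in the non-exceptional case, and with $|V|\ge q^n$ and $n>4\log p$ the bound follows at once; the Seager exceptions force $n\le 16$, which is incompatible with $n>4\log p$ for $p>1800$. Two further pieces you are missing: an intermediate range $2\log p<n\le 4\log p$ is handled separately (if $k(U)\ge 2$ one is done via the sharper inequality $k(H)\ge n+(p-1)k(U)/n$; otherwise $U=1$, so $|H|=pn\le 4p\log p$ and the crude bound $n(H,V)\ge|V|/|H|>p/(4\log p)$ combines with $k(H)\ge p/(4\log p)$); and when $H\le\Gamma(q^m)$, so that Seager gives nothing, the paper carries out a short structural analysis inside $\Gamma(q^m)$, again balancing $k(H)$ against $|V|/|H|$.
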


\begin{proof}
By Lemma \ref{cal} and \cite{hk1}, we may assume that $p>1800$.
Let $P$ be the normal Sylow $p$-subgroup of $H$.
Let $C=\bC_H(P)$ and put $n=|H:C|$.  Since $H/C$ is isomorphic to a subgroup of $\Aut(P)$, we deduce that $H/C$ is cyclic and $n$ divides $p-1$. If $\lambda\in\Irr(P)$ is nonprincipal, then $I_H(\lambda)=C$. By Clifford's correspondence (Theorem 6.11 of \cite{isa}), any faithful  complex representation of $H$ has dimension at least $|H:I_H(\lambda)|=n$.  Furthermore, note that $C=P\times U$, where $U$ is a Hall $p'$-complement of $C$. By Gallagher's theorem (Corollary 6.17 of \cite{isa}) and Clifford's correspondence, there are $k(U)$ irreducible characters of $H$ lying over $\lambda$. We conclude that $H$ has exactly $(p-1)k(U)/n$ irreducible characters lying over nonprincipal irreducible characters of $P$.
Since $H/C$ is abelian, we conclude that
\begin{align}
\label{kh}
k(H)\geq n+\frac{(p-1)k(U)}{n}.
\end{align}
In particular, the result follows if either $n\leq2\log p$ or $n\geq p/2\log p$, so we may assume that $2\log p<n<p/2\log p$.

Since $V$ is a faithful irreducible module in characteristic $q$, using Lemma \ref{fs} we deduce that 
$$|V|\geq q^n\geq2^n>p^2.$$

Suppose that $2\log p<n\leq 4\log p$.  Then
$$
k(H)\geq 2\log p+\frac{(p-1)k(U)}{4\log p}.
$$
In particular  $k(H)\geq p/4\log p$.
Also, the result follows if $k(U)\geq 2$, so we may assume that $U=1$.  Thus $$|H|=pn\leq4p\log p,$$ so $$n(H,V)\geq p^2/4p\log p=p/4\log p.$$
Since $$k(HV)\geq k(H)+n(H,V)\geq\frac{p}{4\log p}+ \frac{p}{4\log p}=\frac{p}{2\log p},$$ the result follows in this case too. 

Hence, we may assume that $4\log p<n<p/2\log p$. 
Recall that by hypothesis $V$ is a primitive $H$-module. (Note that we have not used this hypothesis in the remaining cases.)  If $H$ is not isomorphic to a subgroup of $\Gamma(V)$, then Theorem \ref{sea} implies that the number of nontrivial orbits of $H$ on $V$ is $n(H,V)\geq q^{n/2}/12n$ except for when $|V|=17^4, 19^4, 7^6, 5^8, 7^8, 13^8, 7^9, 3^6$ or $5^{16}$. Thus, in the nonexceptional cases, we have
$$
k(HV)\geq k(H)+n(H,V)\geq n+\frac{p-1}{n}+\frac{2^{n/2}}{12n}.
$$
Note that the third summand is  $\geq p^2/12(p/2\log p)\geq p\log p/6>p/2\log p$ since $p>1800$. The result also follows.

Now, we consider the exceptional cases.  In all of them $|V|\leq q^{16}$. On the other hand, we know that $|V|\geq q^n$. We conclude that $n\leq 16$. Using (\ref{kh}) and the fact that  $p> 1800$ , we have that
$$
k(H)\geq \frac{p-1}{n}\geq\frac{p-1}{16}\geq\frac{p}{2\log p},
$$
and the result follows.

Finally, assume  that $H$ is isomorphic to a subgroup of $\Gamma(V)=CN$ with $N=\GF(q^m)^{\times}$ and $C$ is the Galois group of $\GF(q^m)/\GF(q)$. Thus $D=N\cap H$ is a normal subgroup of $H$ of order a divisor of $q^m-1$. Write $|D|=pf$. Note that $H/D$ is cyclic of order $e$, for some divisor $e$ of $m$. As before,
$$
k(H)\geq e+\frac{pf}{e}.
$$
Thus, we may assume that $e\leq p/(2\log p)$.
If $e\leq 2f\log p$, then the second summand is $\geq p/(2\log p)$ and we are done. Since $2f\log p<e\leq p/(2\log p)$, we deduce that $f<p/(2\log p)^2$.  It follows that
$$
|H|=e\cdot p\cdot f\leq\frac{p}{2\log p}\cdot p\cdot \frac{p}{(2\log p)^2}=\frac{p^3}{(2\log p)^3}.
$$
On the other hand,  since $m\geq e\geq n>4\log p$,
 $$|V|>q^{4\log p}\geq p^4.$$
Hence, the number of orbits of $H$ on $V$ is at least $p^{4}/|H|>p$, and we are done in this case too. This completes the proof.
\end{proof}

The following  is a slightly strengthened version of the second part of Theorem A.

\begin{thm}
\label{11000}
If $G$ be a solvable  group and $p$ divides $|G:\bF(G)|$, then $$k(G)\geq 0.3492(p/\log p).$$ 
Furthermore, if $p>11000$ then $$k(G)>\frac{p}{2\log p}.$$
\end{thm}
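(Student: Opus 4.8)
The plan is to reduce the general solvable case to the "critical" situation already handled by Theorems \ref{p-solvable}, \ref{prop}, and \ref{prop2}, namely a solvable group $H$ acting faithfully and irreducibly on a module $V$ over a prime field, with normal cyclic Sylow $p$-subgroup of order $p$. First I would dispose of the small primes: by Lemma \ref{cal} together with the H\'ethelyi--K\"ulshammer bound $k(G) \geq 2\sqrt{p-1}$ from \cite{hk1}, the bound $k(G) \geq 0.3492(p/\log p)$ is immediate whenever $p \leq 4936$, and $k(G) > p/(2\log p)$ is immediate whenever $p \leq 1800$; so in the refined part we may assume $p > 11000$ and in any case $p$ is large. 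Next, invoke Theorem \ref{p2} to reduce to $|G|_p = p$. Then argue by induction on $|G|$, passing to $G/\Phi(G)$ using $\bF(G/\Phi(G)) = \bF(G)/\Phi(G)$ (III.4.2(d) of \cite{hup}) and $k(G) \geq k(G/\Phi(G))$, so that $\Phi(G) = 1$. With trivial Frattini subgroup, $\bF(G)$ is a direct product of elementary abelian minimal normal subgroups, and one selects a minimal normal subgroup $V$ with $p \mid |G : \bC_G(V)|$, exactly as in the proof of Theorem \ref{psol}.

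From here the structure of $G$ over $V$ is analyzed: using III.4.4 of \cite{hup} write $G = HV$ with $H \cap V = 1$, kill the kernel $\bC_H(V)$ (a normal $p'$-subgroup, removable since $k(G) \geq k(G/\bC_H(V))$ and the hypotheses persist), so that $V$ becomes a faithful irreducible $H$-module over the prime field $\GF(q)$ with $q \neq p$, and by the inductive step applied to $\bF_2(G)/\bF(G)$ we may assume the Sylow $p$-subgroup $P$ of $H$ is normal in $H$; it is cyclic of order $p$ since $|G|_p = p$. Then I would split into the three cases already proved: if $V$ is not quasi-primitive as an $H$-module, apply Theorem \ref{prop} to get $k(HV) \geq 0.3492(p/\log p)$ for all $p$ and $k(HV) \geq p/(2\log p) > p/(2\log p)$ when $p > 11000$; if $V$ is quasi-primitive but the whole $H$-action is primitive, apply Theorem \ref{prop2} to get $k(HV) \geq p/(2\log p)$; and if $V$ is quasi-primitive but imprimitive, that case was folded into Theorem \ref{prop} already. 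A small caveat: Theorem \ref{prop2} gives only $p/(2\log p)$, which already exceeds $0.3492(p/\log p) = 0.6984(p/(2\log p))$, so both asserted inequalities follow in the primitive case; and in the imprimitive/non-quasi-primitive case Theorem \ref{prop} supplies precisely the constants claimed. Finally, $k(G) = k(HV)$, completing the induction.

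The main obstacle is making sure the reductions genuinely land in the hypotheses of Theorems \ref{p-solvable}, \ref{prop}, \ref{prop2} --- in particular that after stripping $\Phi(G)$, choosing the right minimal normal subgroup $V$, and killing $\bC_H(V)$, one really can force $P \trianglelefteq H$ cyclic of order $p$ acting faithfully; this is where the induction on $\bF_2(G)/\bF(G)$ (as in Theorem \ref{psol}) does the work, and one must check the inductive hypothesis of the present theorem, not merely that of Theorem \ref{psol}, is available at each step. The numerical bookkeeping --- verifying $0.3492 < 1/2$ is consistent with the output of Theorem \ref{prop} for all $p > 4936$, and that $p/(2\log p)$ dominates $0.3492(p/\log p)$ --- is routine. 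One should also note explicitly, as the paper does for Theorem \ref{psol}, that when $V$ is non-abelian the relevant module situation does not arise here because $\bF(G)$ (not $\bF^*(G)$) is what carries $p$ in the index, so every $V$ in play is elementary abelian; this keeps us squarely in the module-action framework of Sections 5 and 6.
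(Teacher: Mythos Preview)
Your proposal is correct and follows essentially the same reduction as the paper's own proof: invoke Theorem~\ref{p2} to get $|G|_p=p$, pass to $\Phi(G)=1$, reduce to $G=HV$ with $V$ a faithful irreducible $H$-module and normal cyclic Sylow $p$-subgroup in $H$, then apply Theorems~\ref{prop} and~\ref{prop2}. Your preliminary appeal to Lemma~\ref{cal} and \cite{hk1} is harmless but redundant (both \ref{prop} and \ref{prop2} already handle small $p$ internally), and your remark that the quasi-primitive-but-imprimitive case is ``folded into Theorem~\ref{prop}'' is not quite right---\ref{prop} assumes $V$ is not quasi-primitive---though the paper's proof likewise simply cites \ref{prop} and \ref{prop2} without further comment on this dichotomy.
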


\begin{proof}
If $p^2$ divides $|G|$, it follows from  Theorem \ref{p2}, that $k(G)\geq (49p+1)/60\geq p/(2\log p)$. Hence, we may assume that $|G|_p=p$. Since $k(G)\geq k(G/N)$ for any $N\trianglelefteq G$, we may assume that $p$ divides $|\bF_2(G):\bF(G)|$. By the same reason, we may also assume that $\Phi(G)=1$ so that $\bF(G)$ is elementary abelian. By Gasch\"utz's theorem (III.4.2, III.4.4. and III.4.5 of \cite{hup}), there exists $H\leq G$ such that $G=H\bF(G)$ and the action of $H$ on $\bF(G)$ is completely reducible. Write $\bF(G)=V_1\oplus\cdots\oplus V_t$ as a direct sum of irreducible $H$-modules. For every $i$, let $C_i=\bC_H(V_i)$ and note that $k(G)\geq k((H/C_i)V_i)$. Thus, we may assume that $G=HV$, where $V=\bF(G)$ is a faithful irreducible $H$-module in characteristic $q$, for some prime $q\neq p$. Now the result follows from Theorem \ref{prop} and \ref{prop2}.
\end{proof}

It seems that with additional work it might be possible to get $D=1$ in Theorem \ref{prop} when $p$ is large enough. However, it is not clear how to improve the constant in Theorem \ref{prop2} even when $p$ is very large.

We conclude by remarking that, on the other hand,  the stronger bound in Theorem \ref{11000} also holds for small primes. In fact, even Conjectures C and D hold when $p$ is small enough. More precisely, using \cite{vl1, vl3, vl2} we have checked that these conjectures hold when $k(G)\leq14$. It follows that Conjecture D holds when $p<29$, Conjecture C holds when $p<100$ and the stronger bound in Theorem \ref{11000} holds when $p<230$.







\section{Blocks and characters of $p'$-degree}

As we have mentioned in the Introduction,  the Heth\'elyi-K\"ulshammer problem on the number of conjugacy classes has interesting connections with representation theory. For instance, they already pointed out that the inequality $k(G)\geq2\sqrt{p-1}$ holds for every finite group if it holds for $p$-solvable groups and the Alperin-McKay conjecture is true for every finite group (McKay's conjecture is enough; see \cite{kmz} for this and related questions). The straightforward versions of the results in this paper for $p'$-degree irreducible characters and Brauer $p$-blocks are not true. Recall from \cite{hk1} that the groups $H_p=\CCC_{\sqrt{p-1}}\ltimes\CCC_p$ where $p$ is a prime such that $p-1$ is a square have $2\sqrt{p-1}$ conjugacy classes. Let $V_p$ be a faithful irreducible $H_p$-module over $\GF(q)$, where $q\neq p$ is prime,  and let $G_p=H_p\ltimes V_p$. Then $\Irr_{p'}(G_p)=\Irr(H_p)$ so
$$
|\Irr_{p'}(G_p)|=k(H_p)=2\sqrt{p-1},
$$
and there is no hope to get a better lower bound for the number of $p'$-degree irreducible characters of a finite group $G$ such that $p$ divides $|G:\bF(G)|$ than the one in \cite{mm}. Similarly, if $B_0(G_p)$ is the principal $p$-block of $G_p$, then $k(B_0(G_p))=k(H_p)=2\sqrt{p-1}$. However, the following natural versions could have an affirmative answer.

\begin{que}
\label{pprime}
Is it true that if 
 $G$ is a finite group such  that 
$$
\bigcap_{\chi\in\Irr_{p'}(G)}\Ker\chi=\{1\}
$$
and $p$ divides $|G:\bF(G)|$ then $k(G)\geq p/\log p$?
\end{que}

The subgroup $\bigcap_{\chi\in\Irr_{p'}(G)}\Ker\chi$ is known to have interesting properties. For instance, by a theorem of Y. Berkovich (Theorem 7.7 of \cite{nav2}) it has a normal $p$-complement. 

\begin{que}
\label{block}
Is it true that if 
 $B$ is a faithful Brauer $p$-block of positive defect of a finite group $G$ and $p$  divides $|G:\bF(G)|$ then $k(B)\geq p/\log p$?
\end{que}

It would be interesting to see if the proposed bounds in these questions are of the right order of magnitude. In other words, does there exist a universal constant $c>0$ such that $k(G)$ (respectively $k(B)$) is at least $cp/\log p$?

\end{document}